%%%%%%%%%%%%%%%%%%%%%%%%%%%%%%%%%%%%%%%%%%%%%%%%%%%%%%%%%%%%%%%%
%%La derni\`ere modification a \'{e}t\'{e} effectu\'{e}e le 06 octobre 2016.%%
%%%%%%%%%%%%%%%%%%%%%%%%%%%%%%%%%%%%%%%%%%%%%%%%%%%%%%%%%%%%%%%%
%%%%%%%%%%%%%%%%%%%%%%%%%%%%%%%%%%%%%%%%%%%%%%%%%%%%%%%%%%%%%%%%
%%La derni\`ere modification a \'{e}t\'{e} effectu\'{e}e le 10 octobre 2016.%%
%%%%%%%%%%%%%%%%%%%%%%%%%%%%%%%%%%%%%%%%%%%%%%%%%%%%%%%%%%%%%%%%
%%%%%%%%%%%%%%%%%%%%%%%%%%%%%%%%%%%%%%%%%%%%%%%%%%%%%%%%%%%%%%%%
%%La derni\`ere modification a \'{e}t\'{e} effectu\'{e}e le 11 octobre 2016.%%
%%%%%%%%%%%%%%%%%%%%%%%%%%%%%%%%%%%%%%%%%%%%%%%%%%%%%%%%%%%%%%%%
%%%%%%%%%%%%%%%%%%%%%%%%%%%%%%%%%%%%%%%%%%%%%%%%%%%%%%%%%%%%%%%%
%%La derni\`ere modification a \'{e}t\'{e} effectu\'{e}e le 13 octobre 2016.%%
%%%%%%%%%%%%%%%%%%%%%%%%%%%%%%%%%%%%%%%%%%%%%%%%%%%%%%%%%%%%%%%%
%%La derni\`ere modification a \'{e}t\'{e} effectu\'{e}e le 02 novembre 2016.%%
%%%%%%%%%%%%%%%%%%%%%%%%%%%%%%%%%%%%%%%%%%%%%%%%%%%%%%%%%%%%%%%%
%%La derni\`ere modification a \'{e}t\'{e} effectu\'{e}e le 08 novembre 2016.%%
%%%%%%%%%%%%%%%%%%%%%%%%%%%%%%%%%%%%%%%%%%%%%%%%%%%%%%%%%%%%%%%%

\documentclass[journal]{IEEEtran}

\ifCLASSINFOpdf

\pdfminorversion=4

\else

\fi

\usepackage[T1]{fontenc}
\usepackage[utf8]{inputenc} % ENCODAGE : l'option [utf8] ou [latin1] d\'{e}pend du jeu de caract\`eres utilis\'{e}
\usepackage{amsfonts,latexsym,amsmath,amssymb}
\usepackage{graphicx}
\usepackage{pst-all,pst-eucl}
\usepackage{fancyhdr}

\usepackage{caption}

\usepackage{textcomp}
\usepackage{graphicx,psfrag}
\usepackage{graphics}
\usepackage{subfig}
\usepackage{hyperref}
\usepackage{pdfpages}
\usepackage{listings}
%\usepackage{appendix}
%%%%%%%%%%%%%%%%%%%%%%%%%%%%%%%%%%%%%%%%%%%%%%%%%%%

%pas d'echantillonage de la mesure

\newtheorem{theorem}{Theorem}
\newtheorem{proposition}{Proposition}

\newtheorem{lemma}{Lemma}

\def\RR{{\mathbb R}}

\renewcommand{\leq}{\leqslant}
\renewcommand{\geq}{\geqslant}

  % For Tu
\def\stopmodif{\color{black}}
\def\startmodifVA{\color{blue}} % For Vincent
%\hyphenation{op-tical net-works semi-conduc-tor}

  % For Tu
\def\stopmodif{\color{black}}
\def\startmodifVA{\color{black}} % For Vincent

\begin{document}

\title{Design of integral controllers for nonlinear systems governed by scalar hyperbolic
partial differential equations}

\author{Ngoc-Tu Trinh, \thanks{%
Universit\'{e} de Lyon, LAGEP, B\^at. CPE, Universit\'{e}
Claude Bernard - Lyon 1, 43 Bd du 11 novembre 1918,
F-69622 -- Villeurbanne Cedex, France.
E-mail:  trinh@lagep.univ-lyon1.fr} \and
Vincent Andrieu, \thanks{%
Universit\'{e} de Lyon, LAGEP, B\^at. CPE, Universit\'{e}
Claude Bernard - Lyon 1, 43 Bd du 11 novembre 1918,
F-69622 -- Villeurbanne Cedex, France.
E-mail:  vincent.andrieu@gmail.com}
\and  and Cheng-Zhong Xu$^\star$ \thanks{%
Universit\'{e} de Lyon, LAGEP, B\^at.CPE, Universit\'{e}
Claude Bernard-Lyon 1, 43 Bd du 11 novembre 1918,
F-69622 -- Villeurbanne Cedex, France.
Tel:  (33) 04 72 43 18 90 --- Fax:  (33) 04 72 43 16 99.
E-mail:  xu@lagep.univ-lyon1.fr\hfill
$^\star$ Corresponding author:  xu@lagep.univ-lyon1.fr}
}

%\author{Ngoc-Tu Trinh
%        and~ Cheng-Zhong Xu
%\thanks{N.T TRINH is with the .}% <-this % stops a space
%\thanks{C.Z XU.}% <-this % stops a space
%}

\markboth{IEEE TRANSACTIONS ON AUTOMATIC CONTROL}%
{Shell \MakeLowercase{\textit{et al.}}: Bare Demo of IEEEtran.cls for Journals}

\maketitle

\begin{abstract}

The paper deals with the control and regulation by integral controllers for
the nonlinear systems governed by scalar quasi-linear hyperbolic partial differential
equations. Both the control input and the measured output are located on the boundary.
The closed-loop stabilization of the linearized model with the designed integral controller
is proved first by using the method of spectral analysis and then by the Lyapunov direct
method. Based on the elaborated Lyapunov function we prove local exponential stability
of the nonlinear closed-loop system with the same controller. The output regulation
to the set-point with zero static error by  the integral controller is shown upon
the nonlinear system. Numerical simulations by the Preissmann scheme are carried
out to \startmodifVA validate\stopmodif the robustness performance of the designed  controller
\startmodifVA to face to\stopmodif unknown constant disturbances.

\end{abstract}

\begin{IEEEkeywords}

Boundary control, PI controller, hyperbolic system, Lyapunov function,
partial differential equations, exponential stability, numerical simulation.

\end{IEEEkeywords}

\IEEEpeerreviewmaketitle

\section{Introduction}
The paper is concerned with the control of nonlinear systems governed by scalar quasi-linear hyperbolic
partial differential equations. This type of systems appear in many industrial applications and in study
of traffic flow. For instance, quasi-linear hyperbolic equations \startmodifVA include \stopmodif Burgers equations
\cite{Burgers1948} which are employed in modeling turbulent fluid motion. Another example is given by the
equation employed by Lighthill-Whitham in \cite{Lighthill1955} to describe  traffic flow on long crowded
roads. Finally scalar conservation \startmodifVA laws \stopmodif can also be regarded as \startmodifVA a particular
simpler case of \stopmodif  quasi-linear hyperbolic systems under some regularity assumptions (see for instance
\startmodifVA \cite{Bastin2016,BressanBook2000}). \stopmodif 

The problem of controlling systems governed by hyperbolic partial differential equations
(PDE) with both inputs and outputs on the boundary has attracted a considerable amount of
studies \cite{Georges09,Hammouri1997,dossantos2008}.
Interested readers can find a nice literature review on the fields in the section~2 of
\cite{coron15}  \startmodifVA and in \cite{Bastin2016}. \stopmodif  Many available results have established
appropriate boundary conditions to ensure asymptotic or exponential stability of the equilibrium
state as in \cite{LI, GREENBERG1984, Coron, coron15,Krstic2008}   and \cite{xufeng} or \cite{xuf}
(most of them in $C^1$ topology but also in the $H^2$ topology, see \cite{Coron}).

In these papers the boundary conditions are given as a function of the output. In other words, it is the
static control law that has been investigated. As it is well-known, one of the drawbacks of this type of
controllers is the lack of regulation effect in the presence of constant perturbations. This motivates
\startmodifVA the introduction of \stopmodif integral actions in the control law by using a dynamic output feedback.

\startmodifVA
In this paper, our objective is to design an  integral controller to guarantee asymptotic stability of the
equilibrium of the nonlinear closed-loop system and the output regulation to a given set-point.
The idea of using dynamic output feedback control for infinite dimensional  systems  is inspired by the works
of \cite{pohjolainen}, \cite{Russell1978} and \cite{xu-jerbi}, with implementation of integral action for
infinite-dimensional linear systems. These results have been further developed in recent publications for
linearized hyperbolic systems, by using Lyapunov techniques in \cite{XuSallet} and \cite{dossantos}, by using
Laplace transformation in frequency domain in \cite{tamasouis2015} and by using a semi-group approach in
\cite{XuSallet} and \cite{tu2015}. The backstepping method has been exploited in \cite{LamareEtAl_ECC_2015}
to elaborate PI (proportional and integral) controllers in the same context.
For nonlinear systems,  the work of \startmodifVA Tamasoiu \stopmodif \cite{Tamasoiu2014} considers a PI controller
with damping for a scalar hyperbolic system. However it loses the regulation effect because of the
damping required in the PI controller. It is also interesting to note that asymptotic stabilization of entropy
solutions to scalar conservation laws has been recently studied by Perrollaz in \cite{Perrollaz2013} and by
Balandin {\it et al.} in \cite{Blandin2016}. In particular they have considered the stabilization problem of
weak entropy solutions by boundary control and internal control around a constant equilibrium state for a
scalar 1D conservation law with strictly convex flux. In \cite{Perrollaz2013} an internal
state feedback control law has been designed to asymptotically stabilize the entropy solutions around a
constant equilibrium in the topologies $L^1$ and $L^\infty$. A stabilizing nonlocal boundary control law
(depending on time and the whole  initial  data) has been proposed in \cite{Blandin2016} to get asymptotic
stability of the constant equilibrium in the $L^2$ topology.

In our work, we consider a 1D scalar conservation law with strictly increasing flux. We are interested in
the classical solutions of the system around an equilibrium state. We propose a boundary integral output feedback
controller to asymptotically stabilize the system. The local stability and the regulation effect
for the nonlinear closed-loop system are proven in the $H^2$ topology by using Lyapunov techniques.
The contribution of our paper is threefold: (i) inspired from \cite{Coron} we construct a new Lyapunov functional
to prove exponential stabilization of the closed-loop system with the designed integral controller; (ii) we provide
a mathematical proof of the regulation effect with zero static error offered by the integral action of the dynamical
control law; (iii) the Preissmann scheme is implemented to realize numerical simulations for the nonlinear closed-loop
PDE system.
\stopmodif

The paper is organized as follows. Section~\ref{Sec_Statment} is devoted to the statement of the
regulation problem and the announcement of the main result.  Section~\ref{Sec_MainResult} presents
the proof of the main result. More precisely, in Section~\ref{Sec_MainResultLinear} asymptotic
stabilization of the linearized system is considered by proposing a Lyapunov functional;  In
Section~\ref{Sec_MainResultNonLinear} the stabilization problem is solved for the nonlinear system
by extending the proposed Lyapunov functional. Numerical simulations to validating the theoretical
results are implemented in Section~\ref{Sec_NumSimu} by applying the Preissmann numerical scheme.
Finally Section~\ref{Sec_Conclusion} is devoted to our conclusions.

\section{Statement of the problem and main result}
\label{Sec_Statment}

%\subsection{Nonlinear scalar systems with constant disturbances on the control and the output}
In this paper, we consider a 1D quasi-linear hyperbolic system of the form~:
\begin{equation} \label{sys1}
\dfrac{\partial \psi }{\partial t} (x,t)+ \widetilde{F}(\psi(x,t)) \dfrac{\partial \psi}{\partial x} (x,t)   = 0,
\ x \in (0, L), \ t \in \RR_+,
\end{equation}
where $L$ is a positive constant, $\psi : (0, L) \times \RR_+ \rightarrow \RR$ is the state
in $C([0,\infty), H^2(0,L))$, and $\widetilde{F} : \RR \rightarrow \RR $ is a  $C^2$ function such that
$\widetilde{F}(\sigma)>0$ $\forall\; \sigma \in \RR$. The initial condition is given by
$\psi (\cdot,0)=\psi_0 \in H^2(0,L)$. Notice that $H^2(0,L)$ is the usual Sobolev space defined by
$$ H^2(0,L)=\{f\in L^2(0,L) \;\vert\; f', f^{''} \in L^2(0,L)\}$$
where $L^2(0,L)$ denotes the usual Hilbert space of square summable functions on the open set  $(0,L)$.
The Sobolev space  $H^2(0,L)$ is normed by
$$\|f\|^2_{H^2}= \int_0^L(|f(x)|^2+|f'(x)|^2+ |f^{''}(x)|^2)dx$$ $$\;\forall \; f\in H^2(0,L).$$

We consider the control $u$ on the boundary $x=0$, i.e.,
$$\psi(0,t) = u(t)\ , \ t\in\RR_+\ .$$
The output we wish to regulate is also located on the boundary and eventually corrupted  by an
additive unknown disturbance, i.e.
$$y(t) = \psi(L,t) + w_o \ , \ t\in\RR_+\ ,$$
where $w_o\in \RR$ is an unknown constant.
Our control objective is to design a dynamic output feedback control law in order to achieve
asymptotic stabilization of the closed loop system and to ensure that \textit{the output}
 $y(t)$ converges to a desired set-point $y_r \in \RR$, as $t\rightarrow \infty$.

In our study, the control action $u(t)$ has the structure of an integral controller. We assume
that an unknown constant disturbance may corrupt the control. Hence we write the
control law as follows
$$u(t) = -k_I \zeta(t) + w_c\ , \dot{\zeta}(t) = y(t) - y_r$$
where $w_c\in \RR$ is an unknown constant and $k_I$ is a positive constant called tuning parameter.

To summarize, the closed-loop system with disturbances is governed by the following PDE~:
\begin{equation} \label{sysnoise}
\left\{
\begin{array}{rl}
\dfrac{\partial \psi}{\partial t} (x,t) &=- \widetilde{F}(\psi(x,t)) \dfrac{\partial \psi}{\partial x}
(x,t) \\[0.3 cm]
\dot \zeta(t) &=  \psi(L,t) -y_r + w_o \\[0.3 cm]
\psi(0,t) &= -k_I \zeta(t) + w_c\\[0.2cm]
\psi(x,0) &= \psi_0(x),\;\;\; \zeta(0)=\zeta_0.
%y(t) &= \phi(L,t) + w_o
\end{array}
\right.
\end{equation}
We are studying a nonlinear infinite-dimensional system controlled  by an integral controller
faced with unknown constant disturbances on the control and the output.
The purpose of the paper is to find sufficient conditions on the control parameter $k_I>0$ such that
the three objectives are realized : (a) the closed-loop system (\ref{sysnoise}) is well posed; (b)
asymptotic stability of the closed-loop system is guaranteed; and (c) the regulation property holds
\begin{equation}\label{eq_regulation}
\lim\limits_{t \to \infty} | y(t) -y_r|=0.
\end{equation}

As only the classical solutions are considered, in the following we restrict ourselves to study
the solutions from the initial data
\startmodifVA $(\psi_0,\zeta_0)$ \stopmodif in $H^2(0,L) \times \RR$ which satisfy the $C^0$ and $C^1$
compatibility conditions~:
\begin{equation} \label{initialdata}
\left\{
\begin{array}{rl}
\psi_0(0) &= -k_I \zeta_0 + w_c \\
 \widetilde{F}(\psi_0(0)) \psi'_0(0) &= k_I(\psi_0(L) -y_r + w_o).
\end{array}
\right.
\end{equation}
To be simple the initial data with the compatibility condition satisfied
up to the required order are called compatible initial data throughout the paper.
From now on,  the state space $X$ for (\ref{sysnoise}) is the Hilbert space $X=H^2(0,L)\times \RR$
equipped with the norm $\|(f, z)\|^2_X=\|f\|^2_{H^2}+z^2$. Note that due to the constants $w_0$, $w_c$
and $y_r$, $(\psi,\zeta)=(0,0)$ is not a steady state of the closed-loop system. In fact the equilibrium
denoted $(\psi_\infty,\zeta_\infty)$ is defined as follows
\begin{equation}
\label{equilibriumpoit}
\psi_{\infty} = y_r - w_o, \;  \;  \zeta_{\infty} =k_I^{-1}(w_o + w_c-y_r).
\end{equation}
Let $B_X((f,z), \delta)$ denote the open ball in $X$ centered at $(f,z)$  with radius $\delta>0$, i.e.,
$$B_X((f,z), \delta) =\left\{ (\psi, \zeta)\in X\;\; |\; \;\|(\psi, \zeta) -(f, z)\|_X < \delta\right\}.$$
Then the main result of the paper is stated as follows.

\begin{theorem}  \label{Thm_Main}
There exist positive real constants  $k_I^*$ and $\delta$ such that, for each $k_I \in (0,k_I^*)$,
and for every $(y_r, w_o, w_c) \in \RR^3$ and every compatible $(\psi_0,\zeta_0) \in$
$B_X((\psi_\infty, \zeta_\infty), \delta)$, the following assertions hold true~:
\begin{enumerate}
\item The closed-loop system (\ref{sysnoise}) has a unique solution $(\psi, \zeta) \in C([0,\infty), X)$;
\item The solution of the closed-loop system (\ref{sysnoise}) converges exponentially to the equilibrium
state $(\psi_\infty,\zeta_\infty)$ in the state space $X$ as $t\rightarrow \infty$, and the disturbed
output is regulated to the desired set-point $y_r$, i.e.,
 $$ \lim\limits_{t \to \infty} | y(t) -y_r|=0 \ . $$
 \item There exist real constants $M>0$ and  $\omega>0$ such that
 $$
 \| (\psi(\cdot, t)-\psi_\infty, \zeta(t)-\zeta_\infty)\|_X  \leq  $$
 $$ M e^{-\omega t}
 \| (\psi_0-\psi_\infty, \zeta_0-\zeta_\infty)\|_X \; \; \forall t \geq 0.
 $$

\end{enumerate}
\end{theorem}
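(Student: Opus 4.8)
The plan is to carry out the three–stage program announced above. First I would move the equilibrium to the origin by setting $\phi := \psi - \psi_\infty$, $\eta := \zeta - \zeta_\infty$ and $F(\sigma) := \widetilde F(\sigma + \psi_\infty)$; by (\ref{equilibriumpoit}) the constants $y_r,w_o,w_c$ cancel and (\ref{sysnoise}) becomes the homogeneous problem $\phi_t = -F(\phi)\phi_x$ on $(0,L)$, $\dot\eta = \phi(L,t)$, $\phi(0,t) = -k_I\eta(t)$, with $(0,0)$ as steady state, with (\ref{initialdata}) rewritten accordingly, and with $y(t)-y_r = \phi(L,t)$. Setting $\lambda := F(0) = \widetilde F(\psi_\infty) > 0$, its linearization at the origin is the transport–integrator cascade $\phi_t = -\lambda\phi_x$, $\dot\eta = \phi(L,t)$, $\phi(0,t) = -k_I\eta(t)$, which along characteristics is, for $t\geq L/\lambda$, the scalar delay equation $\dot\eta(t) = -k_I\,\eta(t-L/\lambda)$.

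For the linearized cascade I would first run the spectral analysis: the ansatz $e^{st}(\Phi(x),H)$ gives $\Phi(x)=\Phi(0)e^{-sx/\lambda}$ and the characteristic equation $s + k_I e^{-sL/\lambda}=0$, and a homotopy argument in $k_I$ (or the argument principle) yields $k_I^*>0$ and $\gamma>0$ with all roots in $\{\mathrm{Re}\,s\leq -\gamma\}$ for $k_I\in(0,k_I^*)$, the dominant one real and $\approx -k_I$. The decisive ingredient, though, is a Lyapunov functional in the spirit of \cite{Coron}. The idea is to introduce the \emph{shifted integrator} $\widehat\eta := \eta + \lambda^{-1}\int_0^L\phi(x,t)\,dx$, which on the linearized flow obeys $\dot{\widehat\eta} = \phi(0,t) = -k_I\eta = -k_I\widehat\eta + k_I\lambda^{-1}\int_0^L\phi\,dx$ — so it is genuinely damped — and to take
$$
V(\phi,\eta) = \int_0^L e^{-\mu x}\bigl(\phi^2+\phi_x^2+\phi_{xx}^2\bigr)\,dx + c\,\widehat\eta^{\,2}
$$
with $\mu>0$ and $c>0$ small (in particular $c<\lambda^2 e^{-\mu L}/L$, which also gives $\beta_1\|(\phi,\eta)\|_X^2\leq V\leq\beta_2\|(\phi,\eta)\|_X^2$). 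Since $\phi_x$ and $\phi_{xx}$ solve the same transport equation with boundary data $\lambda\phi_x(0,t)=k_I\phi(L,t)$ and $\lambda\phi_{xx}(0,t)=k_I\phi_x(L,t)$ (obtained by differentiating $\phi_t=-\lambda\phi_x$ and the control law in $t$), differentiating $V$ and integrating by parts produces the good interior term $-\mu\lambda\int_0^L e^{-\mu x}(\phi^2+\phi_x^2+\phi_{xx}^2)$, nonpositive outgoing boundary terms at $x=L$, the damping $-ck_I\widehat\eta^{\,2}$, and remainder terms each carrying a factor $k_I$ or $k_I^2$ (from $\phi(0)=-k_I\eta$, from the boundary data above, and from the cross term, handled by Cauchy–Schwarz). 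Choosing $k_I^*$ small enough makes these remainders dominated, so $\dot V\leq -\alpha V$ for all $k_I\in(0,k_I^*)$, with rate $\alpha\sim k_I$, consistent with the spectral picture.

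Next I would handle the quasi-linear closed loop. Local existence and uniqueness of $(\phi,\eta)\in C([0,T);X)$ for compatible data is classical for one-dimensional quasi-linear hyperbolic equations with a dynamic boundary condition — the speed $F(\phi)$ stays positive near $0$, so the boundary at $x=0$ just appends a coupled ODE (see e.g.\ \cite{LI}). Evaluating the \emph{same} $V$ along the nonlinear flow, $\dot V$ acquires: contributions from $F(\phi)-\lambda=O(\phi)$ in the principal part (absorbed by one further integration by parts, giving $\partial_x(F(\phi)-\lambda)=O(\phi)+O(\phi_x)$ in $L^\infty$ times quadratic terms), the quadratic sources $F'(\phi)\phi_x^2$, $F'(\phi)\phi_x\phi_{xx}$, $F''(\phi)\phi_x^3$ arising when $\phi_t=-F(\phi)\phi_x$ is differentiated in $x$, and $O(\phi)$-corrections to the differentiated boundary relations. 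By the embedding $H^2(0,L)\hookrightarrow C^1([0,L])$ all of these are bounded by $C\sqrt V\,V$ plus boundary terms dominated by the outgoing ones once $\sqrt V$ is small; hence on $B_X((\psi_\infty,\zeta_\infty),\delta)$ with $\delta$ small one still has $\dot V\leq -\tfrac{\alpha}{2}V$. A standard continuation argument then upgrades the local solution to a global one with $V(t)\leq V(0)e^{-\alpha t/2}$: this gives assertion~1, assertion~3 with $M=\sqrt{\beta_2/\beta_1}$ and $\omega=\alpha/4$, hence $\|(\phi(\cdot,t),\eta(t))\|_X\to 0$; and since $y(t)-y_r=\phi(L,t)$ with $|\phi(L,t)|\leq C\|\phi(\cdot,t)\|_{H^2}$, the regulation claim $|y(t)-y_r|\to 0$ of assertion~2 follows.

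The main obstacle is the Lyapunov step: finding the reformulation of the integrator state (the shift by $\lambda^{-1}\int_0^L\phi$) that endows it with true $-k_I\widehat\eta^2$ damping despite its not being directly stabilized, while closing the estimate at the $\phi_{xx}$ level through the twice-differentiated boundary conditions and, in the nonlinear stage, absorbing the cubic remainders and the perturbed boundary quadratic forms uniformly on a fixed small ball. The spectrum-determined growth question for the linear semigroup is bypassed because the Lyapunov functional already gives exponential decay in $X$.
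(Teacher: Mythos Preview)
Your strategy is sound and would yield the theorem; the overall architecture (shift to the origin, spectral analysis of the linearization, Lyapunov extension to $H^2$, local well-posedness plus continuation) matches the paper's. The genuine difference is the Lyapunov functional. The paper does \emph{not} introduce a shifted integrator; at the $L^2$ level it takes
\[
V(\phi,\xi)=\int_0^L\bigl[e^{-\mu x}\phi^2(x)+q_1\,\xi\,\phi(x)e^{-\mu x/2}\bigr]\,dx+q_2\,\xi^2,
\]
with $q_1=2k_I$, $q_2=rk_Ie^{-\mu L/2}$, so the coupling between $\phi$ and $\xi$ is realized through an explicit cross term rather than by your change of variable $\widehat\eta=\eta+\lambda^{-1}\int_0^L\phi$. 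This lets the paper extract an \emph{explicit} threshold $k_I^*=(r/2L)\max_{z\in[0,2]}\sqrt{z(2-z)}\,e^{-z/2}\approx 0.17\,r/L$ for the Lyapunov inequality (Lemma~\ref{T1_Xu2016}), to be compared with the sharp spectral bound $r\pi/(2L)$ from Proposition~\ref{LinearXuAvril2016}. Your construction gives only ``$k_I$ small enough'' but has the conceptual advantage of making the integrator damping transparent: $\dot{\widehat\eta}=-k_I\widehat\eta+k_I\lambda^{-1}\!\int_0^L\phi$ displays the $-k_I\widehat\eta$ term directly, whereas in the paper the negativity is hidden in the sign of a $2\times2$ quadratic form. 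For the $H^2$ step the paper adds $q_3\!\int e^{-\mu x}\phi_x^2+q_4\!\int e^{-\mu x}\phi_{xx}^2$ with $q_3,q_4$ chosen \emph{small} so that each incoming boundary flux is absorbed by the outgoing one at the level below; you take equal weights and rely instead on smallness of $k_I$ for the same absorption --- both choices close. One point to tighten: the paper justifies the Lyapunov differentiation by an $H^4$ density argument (smooth compatible data, continuous dependence, then pass to the limit); your write-up implicitly assumes enough regularity to differentiate $V$ along solutions and should make this step explicit.
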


\begin{remark}\label{remark_Xu1} The equilibrium state is some constant state determined by $y_r$,
$w_o$ and $w_c$. Though it is unknown {\it a priori},  the state of the closed-loop system is bounded
because of the asymptotic stability property of the equilibrium. Moreover the output is always regulated to the
set-point independently of the unknown disturbances. It is the virtue of the integral controller that
allows to suppress the static error and hence achieves output regulation. A more general situation is explained in the paper
\cite{XuSallet}.
\end{remark}

\begin{remark}\label{remark_Xu2} The solution considered in Theorem~\ref{Thm_Main} is  a classical
solution in the sense of Li and Yu \cite{LI}. However the topology used here is the topology induced by
the Hilbert $H^2$ norm instead of the $C^1$ norm.
\startmodifVA
Moreover,  we have only local exponential stability of the equilibrium of
the closed-loop system. For  initial compatible data outside some neighborhood of the equilibrium,
the classical solution to the Cauchy problem  (\ref{sysnoise}) may not be extended on the whole
positive time axis.
\stopmodif
\end{remark}

The proof of Theorem~\ref{Thm_Main} is given in the next section. Our proof is based on the construction of
an appropriate Lyapunov functional. The direct Lyapunov approach allows us to consider the tuning parameter
$k_I$ relatively bigger than the classical method (see \cite{Davison1976}, \cite{pohjolainen} and \cite{xu-jerbi}).
The proposed upper bound on $k_I$ is computed directly from the given system. This
may be the advantage of our approach with respect to that of the literature \cite{pohjolainen, xu-jerbi, XuSallet}.

\section{Proof of the main result}
\label{Sec_MainResult}
To prove Theorem~\ref{Thm_Main} we consider the following transformation :
\begin{equation} \label{transformation}
\phi(x,t) = \psi(x,t) - \psi_{ \infty}  \ ,  \ \xi(t) = \zeta(t) - \zeta_{\infty}
\end{equation}
where $\psi_{ \infty}$ and $\zeta_{\infty}$ are defined in (\ref{equilibriumpoit}).
Then we obtain a perturbation free nonlinear closed-loop system as follows~:
\begin{eqnarray}
\phi_t(x,t) &=&- F(\phi(x,t))  \phi_x(x,t) \label{newsys} \\
\dot{\xi}(t) &= &{\phi}(L,t)  \label{controller}\\
\phi(0,t)& = & -k_I {\xi}(t)  \label{boundary} \\
\phi(x,0) & = & \phi_0(x)=\psi_0(x)-\psi_\infty \label{1Xu15dec2015}\\
 \xi(0) & = & \xi_0=\zeta_0-\zeta_\infty \label{2Xu15dec2015}
\end{eqnarray}
where $\phi_t(x,t)$ denotes the time partial derivative of $\phi(x,t)$, and
 we have defined $F(\phi)=\widetilde{F}(\phi+\psi_\infty)$.
In the new coordinates, the output is written as
$$
y(t) = {\phi}(L,t) + y_r.
$$
Hence the output regulation to $y_r$ is achieved  if
$$
\lim\limits_{t \to \infty}|\phi(L,t)| = 0.
$$
To guarantee the output regulation of the disturbed nonlinear system (\ref{sysnoise}),
we design the integral controller so as to ensure local asymptotic stabilization to
the origin of the equivalent system (\ref{newsys})-(\ref{2Xu15dec2015}).

In the following, the integral stabilization problem of the equivalent system is
considered first for the linearized case in Section~\ref{Sec_MainResultLinear}
and then for the nonlinear case in Section~\ref{Sec_MainResultNonLinear}. Finally
the complete proof of Theorem~\ref{Thm_Main} is presented in Section~\ref{Sec_MainResultProof}.

\subsection{Linear hyperbolic system}
\label{Sec_MainResultLinear}

The purpose of this Section is to study stability property of the origin for the nonlinear hyperbolic
system with an integral controller on the boundary as described in (\ref{newsys})-(\ref{2Xu15dec2015}).
To begin with, we consider the particular case \startmodifVA where the system is linear, i.e.,  $F$
does not depend on $\phi$. This is the case if for instance the considered system is obtained by \stopmodif the
tangent linearization of the nonlinear system around the equilibrium state.
In this subsection, we consider the following linear system :
\begin{eqnarray}
\phi_t  &=&- r  \phi_x \ , \ r >0 \label{linearsys} \\
\dot \xi &=&{\phi}(L,t), \;\;  \phi(0,t) = -k_I {\xi}(t) \label{controllerlinear}\\
\startmodifVA \phi(x,0) \stopmodif & =& \phi_0(x),\;\; \xi(0)  =  \xi_0. \label{boundarylinear}
\end{eqnarray}
To the system (\ref{linearsys})-(\ref{boundarylinear}) is associated
the state space $Z$ which is the Hilbert space $Z=L^2(0,L) \times \mathbb R$ equipped
with \startmodifVA the scalar product
\begin{multline*}
\langle (\phi_a,\xi_a), (\phi_b,\xi_b) \rangle_Z =  \int_0^L \phi_a(x)\phi_b(x) dx + \xi_a\xi_b\\ \forall\; ((\phi_a, \xi_a), (\phi_b, \xi_b))  \in Z^2.
\end{multline*}
and we denote by $\|\cdot\|_Z$ its associated norm.
\stopmodif
%
% $L^2$ norm as follows
%$$ \|(\phi, \xi)\|_Z^2= \int_0^L \phi^2(x) dx + \xi^2\;\;\;\forall\; (\phi, \xi) \in Z.$$
The first stability result is obtained by employing the Laplace transform approach.
\startmodifVA
%%ICI TU AVAIT ECRIT UNE PHRASE QUE J'AI ENLEVER CAR JE NE LA COMPRENAIS PAS. J'AI AJOUTER LES REFERENCES %%DANS L'APPENDIX MAIS EST CE SUFFISANT ?.
\stopmodif
%{\color{red}One also can find some perspectives and a small part of the proof for this result in %\cite{Bellman1963} Chapter 13 or \cite{Hale1993} Appendix Theorem 3.5}.

\begin{proposition} \label{LinearXuAvril2016}
The closed-loop linear system (\ref{linearsys})-(\ref{boundarylinear}) is exponentially stable
in $Z$ w.r.t. $L^2$ norm if and only if $k_I \in  \left(0, \dfrac{r\pi}{2L}\right)$.
\end{proposition}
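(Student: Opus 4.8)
My plan is to reduce \eqref{linearsys}--\eqref{boundarylinear} to a scalar transcendental characteristic equation and then locate its roots. First I would apply the Laplace transform in $t$ (equivalently, integrate along the characteristics of the transport equation) to get $\widehat\phi(x,s)=e^{-sx/r}\widehat\phi(0,s)+\frac1r\int_0^x e^{-s(x-\sigma)/r}\phi_0(\sigma)\,d\sigma$, and then, inserting $\widehat\phi(0,s)=-k_I\widehat\xi(s)$ and $s\widehat\xi(s)-\xi_0=\widehat\phi(L,s)$,
\begin{equation*}
\Delta(s)\,\widehat\xi(s)=\xi_0+\frac1r\int_0^L e^{-s(L-\sigma)/r}\phi_0(\sigma)\,d\sigma ,\qquad \Delta(s):=s+k_I\,e^{-sL/r}.
\end{equation*}
Equivalently (and this is the cleaner bookkeeping) the method of characteristics shows that $\xi$ solves the delay differential equation $\dot\xi(t)=-k_I\,\xi(t-L/r)$ for $t\ge L/r$, with history on $[0,L/r]$ built from $(\phi_0,\xi_0)$, and $\phi(x,t)=-k_I\,\xi(t-x/r)$ for $t\ge x/r$. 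Since $\|\phi(\cdot,t)\|_{L^2}^2=r\,k_I^2\int_{t-L/r}^{t}|\xi(\sigma)|^2\,d\sigma$ once $t\ge L/r$, I would note that exponential stability in $Z$ is equivalent to exponential decay of $\xi$, which — because $\Delta$ is a retarded‑type quasipolynomial, so the spectrum‑determined growth property holds — is governed by the spectral abscissa $\sigma_0:=\sup\{\operatorname{Re}s:\Delta(s)=0\}$: the system is exponentially stable in $Z$ iff $\sigma_0<0$. So the proposition reduces to showing $\sigma_0<0\iff k_I<r\pi/(2L)$ (the endpoint $k_I=0$ being trivially excluded, as then $\xi$ is not driven to $0$).

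For the sufficiency ($k_I L/r<\pi/2$) I would argue by contradiction: if $\Delta(s)=0$ with $s=\mu+i\nu$, $\mu\ge0$, then taking moduli gives $|s|=k_I e^{-\mu L/r}\le k_I$, and taking real parts gives $\mu=-k_I e^{-\mu L/r}\cos(\nu L/r)$, forcing $\cos(\nu L/r)\le0$, hence $|\nu|\,L/r\ge\pi/2$, i.e. $|\nu|\ge r\pi/(2L)>k_I\ge|s|\ge|\nu|$ — impossible. Thus $\Delta$ has no zero with $\operatorname{Re}s\ge0$. I would then upgrade this to a genuine spectral gap: any zero with $\operatorname{Re}s\ge-\varepsilon_0$ satisfies $|s|=k_I e^{-L\operatorname{Re}(s)/r}\le k_I e^{L\varepsilon_0/r}$, so such zeros are finite in number, and since none lies in the closed right half‑plane, $\sigma_0<0$. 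Picking $\varepsilon\in(0,-\sigma_0)$, $\widehat\xi$ extends holomorphically to $\{\operatorname{Re}s>-\varepsilon\}$ with the decay needed for a Paley–Wiener/inverse‑Laplace estimate (equivalently, use the standard decay theory for the delay ODE), giving $|\xi(t)|\le Ce^{-\varepsilon t}\|(\phi_0,\xi_0)\|_Z$; combined with the $L^2$ identity and the characteristics formula for $\phi$, this yields $\|(\phi(\cdot,t),\xi(t))\|_Z\le Me^{-\varepsilon t}\|(\phi_0,\xi_0)\|_Z$.

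For the necessity it suffices to exhibit, for each $k_I\ge r\pi/(2L)$, a zero of $\Delta$ with $\operatorname{Re}s\ge0$, which yields a non‑decaying solution $\propto e^{st}$ — e.g. at $k_I=r\pi/(2L)$ one checks $\Delta(ik_I)=ik_I+k_I e^{-i\pi/2}=0$, realized by the smooth, boundary‑compatible solution $\xi(t)=\cos(k_I t)$, $\phi(x,t)=k_I\sin\!\big(k_I(t-x/r)\big)$. For $k_I>r\pi/(2L)$ I would use the homotopy $\Delta_a(s)=s+a e^{-sL/r}$, $a>0$: a short computation shows $\Delta_a$ has a purely imaginary zero only when $aL/r\in\{\tfrac\pi2+2m\pi:m\ge0\}$, at $s=\pm ia$, and differentiating $\Delta_a(s(a))\equiv0$ at the first such value gives $\frac{ds}{da}\big|_{a=r\pi/(2L),\,s=ia}=\dfrac{\pi/2+i}{1+\pi^2/4}$, whose real part is positive (and the sign is the same at every threshold). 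Hence a conjugate pair of roots crosses transversally into the open right half‑plane as $a$ passes $r\pi/(2L)$, and since all imaginary‑axis crossings are rightward the number of right half‑plane roots never decreases, so $\Delta=\Delta_{k_I}$ has a root with $\operatorname{Re}s>0$ for every $k_I>r\pi/(2L)$. (One may instead simply invoke the classical theorem of Hayes / Bellman–Cooke for $\lambda+a e^{-\lambda\tau}=0$, stable iff $a\tau<\pi/2$.)

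I expect the main obstacle to be the necessity half: pinning down $r\pi/(2L)$ as the \emph{exact} threshold needs the transversality computation at the first imaginary‑axis crossing together with the observation that later crossings are also rightward, so the escaped roots cannot return — a (qualitative) monotonicity statement for $\sigma_0(k_I)$ as it passes through $0$. A secondary delicate point is justifying that $\sigma_0<0$ really does imply exponential decay of the $Z$‑norm; this relies on $\Delta$ being of retarded rather than neutral type, via the Paley–Wiener argument above (or, more directly, the explicit representation of $\phi$ through $\xi$ and standard exponential‑decay estimates for the delay equation).
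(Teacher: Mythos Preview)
Your proposal is correct and shares the paper's overall strategy: reduce via the Laplace transform to the characteristic equation $s+k_I e^{-sL/r}=0$ (the paper writes it equivalently as $k_I+se^{sL/r}=0$) and then locate the roots. The execution of the root analysis, however, is genuinely different. For sufficiency, the paper parametrizes the roots explicitly by setting $\mu=\sigma+i\eta$, solving $\sigma=-\eta\cos\eta/\sin\eta$ and $\alpha=H(\eta):=(\eta/\sin\eta)\exp(-\eta\cos\eta/\sin\eta)$, and then proves $H$ is monotone on each interval $(2k\pi,2k\pi+\pi)$ to pin down exactly when a root sits in the closed right half-plane; your modulus/real-part contradiction ($|s|\le k_I$ yet $|\nu|\ge r\pi/(2L)>k_I$) is shorter and avoids introducing $H$ altogether. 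For necessity, the paper reads off from the same $H$-analysis that $\alpha\ge\pi/2$ forces a root with $\sigma\ge0$, whereas you use a root-locus/transversality argument in the parameter $a=k_I$ (plus the observation that all imaginary-axis crossings are rightward). Both are standard routes to the Hayes/Bellman--Cooke criterion the paper cites; the paper's $H$-parametrization gives an explicit description of \emph{all} roots at once, while your approach is more modular and makes the delay-equation structure $\dot\xi(t)=-k_I\xi(t-L/r)$ explicit, which also lets you argue the ``spectrum determines growth'' step directly rather than by citation.
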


\vspace{0.2cm}

The proof of this result can be obtained from \cite[p.444, Chapter 13]{Bellman1963}
or from \cite[Appendix Theorem A.5]{Hale1993}. For the reader's convenience a simple proof is
given in Appendix \ref{Sec_Appendix}.
%\begin{proof}
%See Appendix A.
% \end{proof}

By frequency-domain analysis it is possible to \startmodifVA establish
some necessary and sufficient conditions \stopmodif on the parameter $k_I$
for asymptotic stability of the \startmodifVA equilibrium to
the \stopmodif linear closed-loop system (\ref{linearsys})-(\ref{boundarylinear}).
However the approach is no longer applicable
when dealing with a general nonlinear system. This is the reason why we introduce a Lyapunov
functional for the linear system which  allows us to tackle the nonlinear hyperbolic system
in the following section.

The Lyapunov functional candidate $V~: Z\rightarrow \mathbb R$  has the following form :
\begin{equation}  \label{Energy1_Xu2016}
 V(\phi, \xi) = \int_{0}^{L} \left[ \phi^2(x)e^{-\mu x} +
q_1 \xi \phi(x)e^{-\frac {\mu x} {2}}\right] dx + q_2 \xi^2
\end{equation}
where  $\mu >0$ and $q_i >0$ $\forall$ $ i = 1,2$.  Consider the function $\Pi~: [0, 2]\rightarrow
\mathbb R$ such that $\Pi(z)= \sqrt{z(2-z)}e^{-z/2}$. We have $\Pi(2-\sqrt{2})\simeq 0.3395$ that
is the maximum value of  $\Pi(z)$ in $[0,2]$.

Given $T>0$ and a function $\phi: (0,L)\times (0,T)\rightarrow \mathbb R$, we use the notation
$\phi(t):=\phi(\cdot, t)$ when there is no ambiguity.  Assume that the initial
condition is smooth enough so that the solution of (\ref{linearsys})-(\ref{boundarylinear})
is continuously differentiable with respect to time $t$ and space $x$. Then, by differentiating
$V(\phi(t),\xi(t))$ with time along the solution and by using integration by parts we get
\begin{equation}  \label{Lyap3_Xu2016}
\begin{array} {lll}
\dot{V}(\phi(t),\xi(t)) &=& - r e^{-\mu L} \phi^2 (L,t) - k_I\; r (q_1 - k_I )  \xi^2(t) \\[0.2 cm]
&-&
\mu  r \int_{0}^{L} e^{-\mu x} \phi^2(x,t)dx \\[0.2 cm]
&+& \left(2 q_2 - q_1 r e^{-\frac{\mu L}{2}}\right) \xi (t) \phi (L,t)  \\[0.2 cm]
&-&
\dfrac{\mu q_1 r}{2} \xi (t) \int_{0}^{L} e^{-\frac{\mu x}{2}} \phi(x,t)dx \\[0.2 cm]
&+& q_1 \phi(L,t) \int_{0}^{L} e^{-\frac{\mu x}{2}} \phi(x,t)dx. \\
\end{array}
\end{equation}

\begin{lemma}   \label{T1_Xu2016} Let $k_I^\ast=
\left(\frac{\textstyle r}{\textstyle 2L}\right) \Pi(2-\sqrt{2})$.
\startmodifVA Take \stopmodif $k_I \in (0, k_I^\ast)$ and $\mu \in (0, (2-\sqrt{2})/L]$ such that
$\left(\frac{\textstyle r}{\textstyle 2L}\right) \Pi(\mu L) >k_I$. Let  $q_1=2k_I$  and
let $q_2=r k_I e^{-\mu L/2}$. Then \startmodifVA there exist \stopmodif positive constants $M \geq 1$
and $\alpha >0$ such that
\begin{equation}  \label{Lyap1_Xu2016} M^{-1} || (\phi, \xi)||_Z^2 \leq V(\phi, \xi)
\leq M || (\phi, \xi)||_Z^2\;\;\forall\; (\phi, \xi) \in Z,  \end{equation}
and for every smooth compatible $(\phi_0, \xi_0) \in Z$
\begin{equation}  \label{Lyap2_Xu2016} \dot{V}(\phi(t),\xi(t))
\leq -\alpha  V(\phi(t),\xi(t)) -
\left( \frac{\textstyle r e^{-\mu L}}{\textstyle 2}\right) \phi^2 (L,t). \end{equation}
\end{lemma}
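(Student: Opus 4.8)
The norm equivalence \eqref{Lyap1_Xu2016} is the easier part: the functional $V$ is a quadratic form in $(\phi,\xi)$ once we view $\int_0^L \phi(x)e^{-\mu x/2}dx$ as a bounded linear functional of $\phi\in L^2(0,L)$ (its norm is $\bigl(\int_0^L e^{-\mu x}dx\bigr)^{1/2}\leq\sqrt{L}$ by Cauchy--Schwarz). The $\phi^2$ term is comparable to $\|\phi\|_{L^2}^2$ since $e^{-\mu L}\leq e^{-\mu x}\leq 1$ on $[0,L]$, the $q_2\xi^2$ term is comparable to $\xi^2$, and the cross term $q_1\xi\int_0^L\phi e^{-\mu x/2}dx$ is dominated, via Young's inequality $|ab|\leq \varepsilon a^2 + \tfrac{1}{4\varepsilon}b^2$, by an arbitrarily small multiple of $\|\phi\|_{L^2}^2$ plus a multiple of $\xi^2$. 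Choosing $\varepsilon$ small and absorbing, one gets both bounds in \eqref{Lyap1_Xu2016} with a finite $M\geq 1$ that depends only on $L,\mu,q_1,q_2$; no use of the stability hypotheses on $k_I$ is needed here.

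**The dissipation estimate \eqref{Lyap2_Xu2016} is the heart of the matter.** Starting from the expression \eqref{Lyap3_Xu2016} for $\dot V$, I would first substitute the prescribed choices $q_1=2k_I$ and $q_2=rk_Ie^{-\mu L/2}$. The substitution kills the pure cross term: $2q_2 - q_1 r e^{-\mu L/2} = 2rk_Ie^{-\mu L/2} - 2k_I r e^{-\mu L/2}=0$, so the line $(2q_2-q_1re^{-\mu L/2})\xi\phi(L,t)$ vanishes. What remains is
\begin{equation*}
\dot V = -re^{-\mu L}\phi^2(L,t) - k_Ir(q_1-k_I)\xi^2 - \mu r\!\int_0^L\! e^{-\mu x}\phi^2 dx - \frac{\mu q_1 r}{2}\xi\!\int_0^L\! e^{-\mu x/2}\phi\, dx + q_1\phi(L,t)\!\int_0^L\! e^{-\mu x/2}\phi\, dx .
\end{equation*}
Note $q_1-k_I=k_I>0$, so the $\xi^2$ coefficient is strictly negative. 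Now the two indefinite terms each involve the linear functional $I(\phi):=\int_0^L e^{-\mu x/2}\phi(x)dx$, with $|I(\phi)|^2\leq \kappa \int_0^L e^{-\mu x}\phi^2 dx$ where $\kappa=\int_0^L dx = L$ (writing $e^{-\mu x/2}\phi = e^{-\mu x/2}\cdot (e^{-\mu x/2}\phi)$ ... actually more carefully $|I(\phi)|^2 \le \bigl(\int_0^L 1\, dx\bigr)\bigl(\int_0^L e^{-\mu x}\phi^2 dx\bigr) = L\int_0^L e^{-\mu x}\phi^2dx$). I would bound the $\xi I(\phi)$ term by Young's inequality, pairing it against a fraction of the $\xi^2$ reservoir and a fraction of the $\int e^{-\mu x}\phi^2$ reservoir, and bound the $\phi(L,t)I(\phi)$ term against a fraction of the $\phi^2(L,t)$ reservoir (here $re^{-\mu L}$) and again a fraction of $\int e^{-\mu x}\phi^2$. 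The bookkeeping must leave: (i) at least $\tfrac12 re^{-\mu L}\phi^2(L,t)$ untouched — this is the explicit leftover term on the right of \eqref{Lyap2_Xu2016}; (ii) strictly negative residual coefficients on $\xi^2$ and on $\int_0^L e^{-\mu x}\phi^2 dx$. Granting (ii), one has $\dot V \le -c_1\xi^2 - c_2\int_0^L e^{-\mu x}\phi^2 dx - \tfrac12 re^{-\mu L}\phi^2(L,t)$ with $c_1,c_2>0$, and since $\int_0^L e^{-\mu x}\phi^2 dx \ge e^{-\mu L}\|\phi\|_{L^2}^2$ this dominates $\alpha(\|\phi\|_{L^2}^2+\xi^2)$ for some $\alpha>0$, hence $\alpha M^{-1}V$ by \eqref{Lyap1_Xu2016}, giving \eqref{Lyap2_Xu2016} after relabelling $\alpha$.

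**The main obstacle is verifying that the Young-inequality splitting can actually be done with strictly positive residuals**, i.e. that the negative reservoirs $-k_I r (q_1-k_I)\xi^2 = -k_I^2 r\,\xi^2$ and $-\mu r\int_0^L e^{-\mu x}\phi^2 dx$ are large enough to absorb the cross terms while still leaving something over. This is exactly where the hypotheses $\mu\in(0,(2-\sqrt2)/L]$ and $(r/2L)\Pi(\mu L) > k_I$ enter, and where the function $\Pi(z)=\sqrt{z(2-z)}e^{-z/2}$ comes from: optimizing the Young split of the $\phi(L,t)I(\phi)$ term against $\tfrac12 re^{-\mu L}\phi^2(L,t)$ and $\mu r\int e^{-\mu x}\phi^2$ produces a constraint of the shape $q_1^2 \lesssim \mu \cdot (re^{-\mu L})/(\text{stuff})$, which after inserting $q_1=2k_I$ rearranges to $k_I < (r/2L)\sqrt{\mu L(2-\mu L)}\,e^{-\mu L/2}$ — precisely $(r/2L)\Pi(\mu L)$, with the restriction $\mu L \le 2-\sqrt2$ ensuring $\Pi$ is on the branch below its maximum so the choice is consistent. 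I would carry out this optimization explicitly: write the quadratic form in the three quantities $\bigl(\phi(L,t),\ \|e^{-\mu x/2}\phi\|_{L^2},\ \xi\bigr)$ as a $3\times3$ symmetric matrix and check negative definiteness (modulo the reserved $\tfrac12 re^{-\mu L}$ piece) via its leading principal minors; the delicate $2\times2$ minor in the $\phi(L,t)$ and $\phi$-integral variables is what yields the $\Pi(\mu L)>2Lk_I/r$ condition, and the remaining minor is easily made negative by choosing the $\xi$-split fraction appropriately using $k_I^2 r > 0$. Finally I would record that the smoothness/compatibility hypothesis on $(\phi_0,\xi_0)$ is used only to justify the differentiation of $V$ along trajectories and the integration by parts leading to \eqref{Lyap3_Xu2016}, the inequalities themselves being then extended to all of $Z$ by density.
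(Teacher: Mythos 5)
Your treatment of the dissipation inequality \eqref{Lyap2_Xu2016} is essentially the paper's own argument: substituting $q_1=2k_I$, $q_2=rk_Ie^{-\mu L/2}$ into \eqref{Lyap3_Xu2016} cancels the $\xi\,\phi(L,t)$ term, the two remaining cross terms are split by Cauchy--Schwarz/Young against half of the $re^{-\mu L}\phi^2(L,t)$ reservoir, half of the $rk_I^2\xi^2$ reservoir and portions of $\mu r\int_0^L e^{-\mu x}\phi^2dx$, and the residual coefficient of the bulk integral is proportional to $k_I^2-\bigl(\tfrac{r}{2L}\Pi(\mu L)\bigr)^2$ --- exactly the paper's inequality (\ref{Lyap12_Xu2016}) --- so the constraint you announce is the right one and the conclusion follows through \eqref{Lyap1_Xu2016} as you say.

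The genuine gap is in your handling of the norm equivalence \eqref{Lyap1_Xu2016}, where you claim that ``no use of the stability hypotheses on $k_I$ is needed.'' That is false for the lower bound, and the absorption you describe does not work: bounding $\bigl|q_1\xi\int_0^L e^{-\mu x/2}\phi\,dx\bigr|\le\varepsilon\int_0^L e^{-\mu x}\phi^2dx+\tfrac{Lq_1^2}{4\varepsilon}\xi^2$, taking $\varepsilon$ small inflates the $\xi^2$ penalty $\tfrac{Lq_1^2}{4\varepsilon}$ far beyond $q_2$, while $\varepsilon<1$ is forced by the $\phi$-reservoir; hence coercivity requires $4q_2>Lq_1^2$, and this condition is sharp, since testing with $\phi(x)=ce^{\mu x/2}$ (which saturates Cauchy--Schwarz) gives $V=Lc^2+q_1Lc\,\xi+q_2\xi^2$, which fails to be positive definite when $4q_2\le Lq_1^2$. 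With the prescribed gains, $4q_2>Lq_1^2$ reads $k_I<\tfrac{r}{L}e^{-\mu L/2}$, which does \emph{not} hold for arbitrary $k_I>0$; it holds here only because the lemma's hypotheses give $k_I<\tfrac{r}{2L}\Pi(\mu L)\le\tfrac{r}{2L}e^{-\mu L/2}<\tfrac{r}{L}e^{-\mu L/2}$. This is precisely what the paper verifies by writing $V$ as an integral quadratic form with matrix $P$ and checking $\det P=q_2-\tfrac{Lq_1^2}{4}\ge\tfrac{rk_I}{2}e^{-\mu L/2}>0$. So your plan for \eqref{Lyap1_Xu2016} must be replaced by (or amended with) this positive-definiteness verification, which genuinely uses the hypotheses on $k_I$ and $\mu$; once that is done, the rest of your argument stands, including the final density remark about smooth compatible data.
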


\begin{proof} Rewrite $V(\phi, \xi)$ as follows
$$V(\phi, \xi) = \int_{0}^{L} \left[ \begin{array}{c}\phi(x)e^{-\mu x/2} \\
\frac{\xi}{\sqrt{L}}\end{array} \right]^\top P\left[ \begin{array}{c}\phi(x)e^{-\mu x/2} \\
\frac{\xi}{\sqrt{L}}\end{array} \right] dx$$
where
$$P =\left[\begin{array}{cc} 1 & \frac{\sqrt{L}q_1}{2}\\
\frac{\sqrt{L}q_1}{2} & q_2\end{array} \right].$$
We claim that the matrix $P$ is positive definite. Indeed, we have \\
$\det (P) = $ $$ Lk_I\left\{ \frac{\textstyle r\Pi(\mu L)}{\textstyle 2L}-k_I+ \frac{\textstyle r}{\textstyle 2L}
e^{-\mu L/2}\left(2-\sqrt{\mu L(2-\mu L)}\right)\right\}.$$
Since $\left(\frac{\textstyle r}{\textstyle 2L}\right) \Pi(\mu L) >k_I$ and $\mu L<2$, it is easy
to see that $\det (P) \geq \frac{\textstyle r k_I}{\textstyle 2} e^{-\frac{\mu L}{2}}$. Hence there
is some real constant $M \geq 1$ such that the inequality (\ref{Lyap1_Xu2016}) holds.

By substituting the given $q_1$ and $q_2$ into (\ref{Lyap3_Xu2016}) we have the following
\begin{equation}  \label{Lyap4_Xu2016}
\begin{array} {lll}
\dot{V}(\phi(t),\xi(t)) &=& - r e^{-\mu L} \phi^2 (L,t) -
\mu  r \int_{0}^{L} e^{-\mu x} \phi^2(x,t)dx\\[0.2 cm]
&-& k_I^2 r \xi^2(t) -  \mu  r k_I \xi (t) \int_{0}^{L} e^{-\frac{\mu x}{2}} \phi(x,t)dx \\[0.2 cm]
&+& 2 k_I \phi(L,t) \int_{0}^{L} e^{-\frac{\mu x}{2}} \phi(x,t)dx. \\
\end{array}
\end{equation}
By using the \startmodifVA Young and Cauchy-Schwarz \stopmodif inequalities we get
\begin{multline}\label{Lyap10_Xu2016}
2 k_I \phi(L,t) \int_0^L e^{-\frac{\mu x}{2}} \phi(x,t)dx \\ \leq
\left( \frac{\textstyle r e^{-\mu L} }{\textstyle 2} \right) \phi^2 (L,t) +
\left( \frac{\textstyle 2L k_I^2 e^{\mu L}} {\textstyle r} \right)
\int_{0}^{L} e^{-\mu x} \phi^2(x,t)dx
\end{multline}
and
\begin{multline}  \label{Lyap11_Xu2016}
\left| \mu r  k_I \xi(t) \int_0^L e^{-\frac{\mu x}{2}} \phi(x,t)dx \right| \\ \leq
\left( \frac{\textstyle r k_I^2}{\textstyle 2} \right) \xi^2(t) +
\left(\frac{\textstyle r \mu^2 L }{\textstyle 2}\right)
\int_{0}^{L} e^{-\mu x} \phi^2(x,t)dx.
\end{multline}
Substituting (\ref{Lyap10_Xu2016}) and (\ref{Lyap11_Xu2016}) into (\ref{Lyap4_Xu2016})
leads us to the following inequality
\begin{multline}  \label{Lyap12_Xu2016}
\dot{V}(\phi(t),\xi(t))  \leq  -  \left( \frac{\textstyle r e^{-\mu L}}{\textstyle 2}\right)
\phi^2 (L,t)
- \left( \frac{\textstyle k_I^2 r }{\textstyle 2}\right) \xi^2(t) \\
-\left( \frac{\textstyle r}{\textstyle 2L} \Pi (\mu L) + k_I \right)
\left( \frac{\textstyle r}{\textstyle 2L} \Pi (\mu L) - k_I \right) J_{\phi, \xi}
\end{multline}
where $$ J_{\phi, \xi} = \left( \frac{\textstyle 2L}{\textstyle r}\right) e^{\mu L} \int_{0}^{L} e^{-\mu x}
\phi^2(x,t)dx.$$
By the choice of $\mu$, we have $\frac{\textstyle r}{\textstyle 2L} \Pi (\mu L) - k_I  >0$.
\startmodifVA
It follows from (\ref{Lyap12_Xu2016}) that there exists a positive real number $M_1$
such that
\stopmodif
%It is meant {\color{red}{so there exists some real positive constant $M_1>0$}}
\begin{equation}  \label{Lyap20_Xu2016}
\begin{array} {lll}
\dot{V}(\phi(t),\xi(t)) &\leq& - M_1 \left(\xi^2(t) +\int_{0}^{L} e^{-\mu x} \phi^2(x,t)dx\right) \\[0.3 cm]
&-&  \left( \frac{\textstyle r e^{-\mu L}}{\textstyle 2}\right) \phi^2 (L,t).
\end{array}
\end{equation}
The required inequality (\ref{Lyap2_Xu2016}) is true by (\ref{Lyap20_Xu2016}) and (\ref{Lyap1_Xu2016}).
\end{proof}

\begin{remark}  It can be noticed that the set of parameter $k_I$ which makes the Lyapunov functional 
decreasing along solutions is smaller than the set of parameter obtained from Proposition~\ref{LinearXuAvril2016}.
Hence, in the linear context our Lyapunov approach is conservative. However the Lyapunov functional
allows us to deal with nonlinear systems as it will be shown in the next Section.
\end{remark}

\subsection{Nonlinear system }
\label{Sec_MainResultNonLinear}
In this section, we consider the problem for the nonlinear system (\ref{newsys})-(\ref{2Xu15dec2015})
with $F(0) = r >0$. By the designed integral controller the nonlinear
closed-loop system  (\ref{newsys})-(\ref{2Xu15dec2015}) is written as follows
\begin{equation}  \label{nonlinear1}
\begin{array}{ll}
\phi_t + F(\phi)  \phi_x =0 \\[0.1 cm]
\dot{\xi}  ={\phi}(L,t)\\[0.1 cm]
\phi(0,t) =  -k_I \xi\\[0.1 cm]
\phi(x,0) = \phi_0(x),\;\;\;  \xi(0)=  \xi_0.
\end{array}
\end{equation}
Let us \startmodifVA set \stopmodif :
$$ s(x,t) =   \phi_x (x,t) \ , \ p(x,t) = \phi_{xx}(x,t). $$
By successive derivatives and compatibility conditions we find that  the dynamics of $s(x,t)$
and $p(x,t)$ are governed by the following PDE, respectively,
\begin{equation}  \label{firstderivative}
\begin{array}{ll}
s_t +F(\phi) s_x &= -F'(\phi) \, s^2 \\[0.1 cm]
F(\phi(0,t))s(0,t) & =  k_I \phi(L,t)\\[0.1 cm]
s(x,0)=\phi'_0(x)
\end{array}
\end{equation}
and
\begin{equation}  \label{secondderivative}
\begin{array}{ll}
p_t +F(\phi) p_x &=-3F'(\phi) \, s\, p  - F''(\phi)\, s^3  \\[0.1 cm]
F^2(\phi(0,t))p(0,t) & = k_I F(\phi(L,t))s(L,t) \\ &- 2 k_I F'(\phi(0,t)) \phi(L,t)s(0,t) \\[0.1 cm]
p(x,0)=\phi''_0(x).
\end{array}
\end{equation}

Now we use the idea presented in \cite{Coron} to extend the Lyapunov functional from the linear system
(in $L^2$ norm) to the nonlinear system (in $H^2$ norm). Therefore local asymptotic stability of the
equilibrium state and the set-point output regulation will be proved for the nonlinear closed-loop
system (\ref{nonlinear1}). To do that, we consider the Lyapunov functional  candidate
$S~: X\rightarrow \mathbb R_+$ such that

\begin{equation} \label{lyapunovcomplet}
 \startmodifVA S(\phi, \xi) \stopmodif = V(\phi, \xi) +q_3 V_1 (\phi_x) +  q_4 V_1(\phi_{xx})
\end{equation}
where $V(\phi, \xi)$ is defined in (\ref{Energy1_Xu2016}) with $q_1$ and $q_2$ given in Lemma~\ref{T1_Xu2016}
and
\begin{equation}  \label{Energy2_Xu2016}
 V_1(\phi_x)=  \int_{0}^{L} e^{-\mu x} \phi_x^2(x) dx
\end{equation}
with the real positive constants $q_3$ and $q_4$ to be determined later.

For the moment we assume that all the required regularity is satisfied and carry out formal computations.

\begin{lemma} \label{Tech1_Xu2016} The time derivative of $V(\phi(t),\xi(t))$ along each regular
trajectory of the nonlinear system (\ref{nonlinear1}) is written as follows
\begin{multline} \label{rate1_Xu2016}
\dot{V}(\phi(t),\xi(t)) \\ = - r e^{-\mu L} \phi^2 (L,t) - k_I^2 r \xi^2(t) -
\mu  r \int_{0}^{L} e^{-\mu x} \phi^2(x,t)dx \\
- \mu  r k_I \xi (t) \int_{0}^{L} e^{-\frac{\mu x}{2}} \phi(x,t)dx
+ \startmodifVA 2k_I \stopmodif \phi(L,t) \int_{0}^{L} e^{-\frac{\mu x}{2}} \phi(x,t)dx \\
- \phi^3(L,t) F_1(\phi(L,t)) e^{-\mu L} + \phi^3(0,t) F_1(\phi(0,t)) \\
+ \int_0^L e^{-\mu x}\left[F'(0)+\phi(x,t) F_2(\phi(x,t))\right] \phi_x(x,t)\phi^2(x,t)dx\\
-\mu \int_0^L e^{-\mu x} F_1(\phi(x,t))\phi^3(x,t)dx \\
-\startmodifVA 2k_I \stopmodif  \xi(t) \; \int_0^L e^{-\mu x/2} F_1(\phi(x,t))\phi(x,t)\phi_x(x,t)dx
\end{multline}
where
\begin{equation} \label{rate2_Xu2016} \begin{array}{l} F(z)=F(0)+F_1(z)z\\
F'(z)=F'(0)+F_2(z)z \end{array} \end{equation}
with $F_1(z)=\int_0^1 F'(\lambda z) d\lambda$ and  $F_2(z)=\int_0^1 F^{''}(\lambda z) d\lambda$.
\end{lemma}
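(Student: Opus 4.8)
This is a bookkeeping identity: one differentiates $V(\phi(t),\xi(t))$ along a regular solution of (\ref{nonlinear1}) and reorganises the result by splitting the flux at the origin. Since the functional (\ref{Energy1_Xu2016}) depends only on $(\phi,\xi)$, only the first two equations of (\ref{nonlinear1}) together with the boundary relation $\phi(0,t)=-k_I\xi(t)$ are needed; the auxiliary systems (\ref{firstderivative})--(\ref{secondderivative}) play no role here.

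First I would differentiate under the integral sign and substitute $\phi_t=-F(\phi)\phi_x$ and $\dot\xi=\phi(L,t)$, obtaining
\begin{multline*}
\dot V=-2\int_0^L F(\phi)\phi\phi_x e^{-\mu x}dx-q_1\xi\int_0^L F(\phi)\phi_x e^{-\mu x/2}dx\\
+q_1\phi(L,t)\int_0^L\phi e^{-\mu x/2}dx+2q_2\xi\phi(L,t),
\end{multline*}
and then insert $F(\phi)=r+F_1(\phi)\phi$ from (\ref{rate2_Xu2016}) into the first two integrals, separating in each the part proportional to $r$ from the part carrying the factor $F_1(\phi)$.

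The $r$-parts are treated exactly as in the derivation of (\ref{Lyap4_Xu2016}) for the linear model: writing $2r\phi\phi_x e^{-\mu x}=r(\phi^2)_x e^{-\mu x}$ and integrating by parts with $\phi(0,t)=-k_I\xi(t)$ produces $-re^{-\mu L}\phi^2(L,t)$, $+rk_I^2\xi^2(t)$ and $-\mu r\int_0^L e^{-\mu x}\phi^2 dx$; integrating $-q_1r\xi\int_0^L\phi_x e^{-\mu x/2}dx$ by parts, again using $\phi(0,t)=-k_I\xi(t)$, produces $-2k_Ir\xi^2(t)$, $-\mu k_Ir\xi\int_0^L e^{-\mu x/2}\phi dx$ and a boundary contribution $-2k_Ire^{-\mu L/2}\xi\phi(L,t)$. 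With $q_1=2k_I$ and $q_2=rk_Ie^{-\mu L/2}$ of Lemma~\ref{T1_Xu2016}, this boundary contribution cancels $2q_2\xi\phi(L,t)$, the two $\xi^2$ terms combine to $-k_I^2r\xi^2(t)$, and $q_1\phi(L,t)\int_0^L\phi e^{-\mu x/2}dx$ is just $2k_I\phi(L,t)\int_0^L\phi e^{-\mu x/2}dx$; this reproduces the first two lines of (\ref{rate1_Xu2016}).

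It remains to handle the two $F_1(\phi)$-terms. The one appearing in the $\xi$-integral, $-2k_I\xi\int_0^L F_1(\phi)\phi\phi_x e^{-\mu x/2}dx$, is already the last line of (\ref{rate1_Xu2016}) and is left untouched. The term $-2\int_0^L F_1(\phi)\phi^2\phi_x e^{-\mu x}dx$ is the only nonroutine point; I would integrate it once more by parts, using the elementary identity $F_1(z)z=F(z)-F(0)$ (immediate from the definition of $F_1$ via the substitution $\sigma=\lambda z$), which upon differentiation gives $F_1'(z)z=F'(z)-F_1(z)$, hence
\[
\frac{d}{dx}\big(F_1(\phi)\phi^3\big)=\big(F_1'(\phi)\phi^3+3F_1(\phi)\phi^2\big)\phi_x=\big(F'(\phi)+2F_1(\phi)\big)\phi^2\phi_x.
\]
Multiplying by $e^{-\mu x}$, integrating over $(0,L)$ and solving for the wanted integral expresses $-2\int_0^L F_1(\phi)\phi^2\phi_x e^{-\mu x}dx$ as the boundary term $-[F_1(\phi)\phi^3 e^{-\mu x}]_0^L$, plus $-\mu\int_0^L F_1(\phi)\phi^3 e^{-\mu x}dx$, plus $\int_0^L F'(\phi)\phi^2\phi_x e^{-\mu x}dx$; evaluating the boundary term and rewriting $F'(\phi)=F'(0)+F_2(\phi)\phi$ via (\ref{rate2_Xu2016}) reproduces the third, fourth and fifth lines of (\ref{rate1_Xu2016}). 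Collecting all contributions yields the claimed identity. The difficulty is entirely organisational — keeping track of the boundary terms and checking that the chosen $q_1,q_2$ force the cancellations; there is no analytic obstruction since the trajectory is assumed regular, so all the integrations by parts are legitimate.
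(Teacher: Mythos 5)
Your proposal is correct and follows essentially the same route as the paper's own (very terse) proof: differentiate $V$, substitute $\phi_t=-F(\phi)\phi_x$ and $\dot\xi=\phi(L,t)$, integrate by parts using the boundary condition $\phi(0,t)=-k_I\xi(t)$ and the choices $q_1=2k_I$, $q_2=rk_Ie^{-\mu L/2}$, and split $F$ and $F'$ via (\ref{rate2_Xu2016}); your identity $\frac{d}{dx}\bigl(F_1(\phi)\phi^3\bigr)=\bigl(F'(\phi)+2F_1(\phi)\bigr)\phi^2\phi_x$ is exactly the step the paper leaves implicit, and all boundary and interior terms match (\ref{rate1_Xu2016}).
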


\begin{proof} By differentiating $V(\phi(t),\xi(t))$ along each regular trajectory of
(\ref{nonlinear1}) the following identity holds true
 $$\dot{V}(\phi(t),\xi(t))=-\int_0^L 2 e^{-\mu x}\phi(x,t) F(\phi(x,t)) \phi_x(x,t)dx $$
 $$- \ q_1 \int_0^L  e^{-\mu x/2}F(\phi(x,t)) \phi_x(x,t) \xi(t) dx  $$
 $$+ \ q_1 \int_0^L  e^{-\mu x/2} \phi_x(x,t)  dx \phi(L,t) +2q_2\xi(t) \phi(L,t).$$
By integration by parts and by using the boundary condition (\ref{nonlinear1}) and the
parameters $q_1$ and $q_2$ given in Lemma~\ref{T1_Xu2016} as well as the relations
(\ref{rate2_Xu2016}) we prove the required identity (\ref{rate1_Xu2016}).
\end{proof}

Similarly we may prove the following lemmas.

\begin{lemma} \label{Tech2_Xu2016} With the same notations as in Lemma~\ref{Tech1_Xu2016},
 the time derivative of $V_1(\phi_x(t))$ along every regular trajectory of the nonlinear system
 (\ref{nonlinear1})-(\ref{firstderivative}) is written as follows
 \begin{multline} \label{rate10_MarchXu2016}
 \dot{V}_1(\phi_x(t))= - r e^{-\mu L} s^2(L,t) \ + \ r^{-1} k_I^2 \phi^2(L,t) \\
 - r \mu \int_{0}^{L} e^{-\mu x} s^2(x,t)dx - k_I^2 F_3(\phi(0,t)) \phi(0,t) \phi^2(L,t) \\
- e^{-\mu L} F_1(\phi(L,t)) \phi(L,t) s^2(L,t)\\
\startmodifVA -\int_0^L\left[(F(\phi(x,t)))_x + \mu F_1(\phi(x,t)) \phi(x,t) \right]  e^{-\mu x} s^2(x,t)dx \stopmodif
 \end{multline}
where $F_3(z)={\displaystyle  \int_0^1 \frac{\textstyle F'(\lambda z)}{\textstyle F^2(\lambda z)} d\lambda}$.
\end{lemma}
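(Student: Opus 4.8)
The plan is to mimic exactly the computation used in the proof of Lemma~\ref{Tech1_Xu2016}, but applied to $V_1(\phi_x) = \int_0^L e^{-\mu x} s^2(x,t)\,dx$ with $s = \phi_x$ governed by the PDE~(\ref{firstderivative}). First I would differentiate under the integral sign and substitute $s_t = -F(\phi)s_x - F'(\phi)s^2$, obtaining
\begin{equation*}
\dot V_1(\phi_x(t)) = -\int_0^L 2 e^{-\mu x} s(x,t)\,F(\phi(x,t))\,s_x(x,t)\,dx - \int_0^L 2 e^{-\mu x} F'(\phi(x,t))\,s^3(x,t)\,dx .
\end{equation*}
For the first integral, write $2 s s_x = (s^2)_x$ and integrate by parts:
\begin{equation*}
-\int_0^L e^{-\mu x} F(\phi)(s^2)_x\,dx = -\Big[e^{-\mu x}F(\phi)s^2\Big]_0^L - \mu\int_0^L e^{-\mu x}F(\phi)s^2\,dx + \int_0^L e^{-\mu x}(F(\phi))_x s^2\,dx ,
\end{equation*}
where $(F(\phi))_x = F'(\phi)\phi_x = F'(\phi)s$. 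The boundary term at $x=0$ is handled using the boundary condition $F(\phi(0,t))s(0,t) = k_I\phi(L,t)$ from~(\ref{firstderivative}); multiplying by $s(0,t)$ and using $F(\phi(0,t))>0$ gives $F(\phi(0,t))s^2(0,t) = k_I\phi(L,t)s(0,t) = k_I^2 F(\phi(0,t))^{-1}\phi(L,t)\,(-k_I\xi/F(\phi(0,t)))$... more directly, $s(0,t) = k_I\phi(L,t)/F(\phi(0,t))$, so $F(\phi(0,t))s^2(0,t) = k_I^2\phi^2(L,t)/F(\phi(0,t))$, and with $F(\phi(0,t)) = F(0) + F_1(\phi(0,t))\phi(0,t) = r + F_1(\phi(0,t))\phi(0,t)$ one peels off the leading term $r^{-1}k_I^2\phi^2(L,t)$ plus a higher-order remainder encoded by $F_3(z) = \int_0^1 F'(\lambda z)/F^2(\lambda z)\,d\lambda$ — this is exactly the source of the terms $r^{-1}k_I^2\phi^2(L,t)$ and $-k_I^2 F_3(\phi(0,t))\phi(0,t)\phi^2(L,t)$ in~(\ref{rate10_MarchXu2016}). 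The boundary term at $x=L$ is $-e^{-\mu L}F(\phi(L,t))s^2(L,t)$, which splits via $F = F(0) + F_1 z$ into $-re^{-\mu L}s^2(L,t) - e^{-\mu L}F_1(\phi(L,t))\phi(L,t)s^2(L,t)$, matching the first and fifth terms.

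It then remains to collect the three interior integrals: $-\mu\int_0^L e^{-\mu x}F(\phi)s^2\,dx$, $+\int_0^L e^{-\mu x}(F(\phi))_x s^2\,dx$, and $-2\int_0^L e^{-\mu x}F'(\phi)s^3\,dx$. Using $(F(\phi))_x = F'(\phi)s$, the last two combine to $-\int_0^L e^{-\mu x}F'(\phi)s\cdot s^2\,dx = -\int_0^L e^{-\mu x}(F(\phi))_x s^2\,dx$; and writing $\mu F(\phi) = \mu F(0) + \mu F_1(\phi)\phi$, the term $-\mu F(0)\int e^{-\mu x}s^2 = -r\mu\int_0^L e^{-\mu x}s^2(x,t)\,dx$ is pulled out as the explicit negative term, while the rest $-\int_0^L[(F(\phi))_x + \mu F_1(\phi)\phi]e^{-\mu x}s^2\,dx$ is precisely the last line of~(\ref{rate10_MarchXu2016}). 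Assembling all pieces yields the claimed identity.

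The computation is entirely routine integration by parts; the only points requiring care — and hence the main obstacle — are (i) correctly exploiting the nonlinear boundary condition $F(\phi(0,t))s(0,t) = k_I\phi(L,t)$ to extract the clean leading term $r^{-1}k_I^2\phi^2(L,t)$ together with the $F_3$-remainder (one must divide by $F(\phi(0,t))$, which is legitimate since $\widetilde F > 0$ everywhere, and then Taylor-expand $1/F$ about $0$ in the integral-remainder form), and (ii) keeping track of signs and of the cancellation between the $F'(\phi)s^3$ term and part of the integration-by-parts remainder. As in Lemma~\ref{Tech1_Xu2016}, all manipulations are justified under the standing assumption that the trajectory is regular enough (here $\phi \in H^2$, so $s = \phi_x \in H^1$ and the boundary traces $s(0,t)$, $s(L,t)$ are well defined), so no additional regularity argument is needed at this stage.
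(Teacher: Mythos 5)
Your computation is correct and follows exactly the route the paper intends (it proves this lemma ``similarly'' to Lemma~\ref{Tech1_Xu2016}): differentiate $V_1(\phi_x)$, substitute $s_t=-F(\phi)s_x-F'(\phi)s^2$, integrate by parts, use the boundary relation $F(\phi(0,t))s(0,t)=k_I\phi(L,t)$, and split $F$ and $1/F$ via $F(z)=r+F_1(z)z$ and $1/F(z)=r^{-1}-zF_3(z)$, which indeed reproduces every term of (\ref{rate10_MarchXu2016}). The key identity $r^{-1}-1/F(z)=zF_3(z)$ with $F_3(z)=\int_0^1 F'(\lambda z)/F^2(\lambda z)\,d\lambda$ is exactly the integral-remainder expansion you invoke, so no gap remains.
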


\begin{lemma} \label{Tech3_Xu2016} With the same notations as in Lemma~\ref{Tech1_Xu2016},
 the time derivative of $V_1(\phi_{xx}(t))$ along each regular trajectory of the nonlinear system
 (\ref{nonlinear1})-(\ref{secondderivative}) is written as follows
\begin{multline} \label{rate11_MarchXu2016}
\dot{V_1}(\phi_{xx}(t)) =\\ -  e^{-\mu L}  F(\phi(L,t))  p^2(L,t)  +
\frac{\textstyle k_I^2 F^2(\phi(L,t))}{\textstyle F^3(\phi(0,t))} s^2(L,t) \\-
  \mu \int_{0}^{L} e^{-\mu x}  F(\phi(x,t)) p^2(x,t)dx  \\+
\frac{\textstyle 4k_I^2 (F'(\phi(0,t))^2}{\textstyle F^3(\phi(0,t))} \phi^2(L,t) s^2(0,t)\\-
\frac{\textstyle 4k_I^3 F(\phi(L,t)) F'(\phi(0,t))}{\textstyle F^4(\phi(0,t))} s(L,t) \phi^2(L,t)
\\- 5 \int_{0}^{L} e^{-\mu x}  F'(\phi(x,t)) s(x,t) p^2(x,t)dx \\-2 \int_{0}^{L} e^{-\mu x}
 F^{''}(\phi(x,t)) s^3(x,t) p(x,t)dx.
\end{multline}
\end{lemma}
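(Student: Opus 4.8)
The plan is to repeat, one order higher, the computation already carried out for $V_1(\phi_x(t))$ in Lemma~\ref{Tech2_Xu2016}, now applied to $V_1(\phi_{xx}(t))=\int_0^L e^{-\mu x}p^2(x,t)\,dx$. Under the standing regularity assumption one may differentiate under the integral sign, so $\dot V_1(\phi_{xx}(t))=2\int_0^L e^{-\mu x}p(x,t)\,p_t(x,t)\,dx$; substituting $p_t=-F(\phi)p_x-3F'(\phi)\,s\,p-F''(\phi)\,s^3$ from the evolution equation in (\ref{secondderivative}) produces three integrals, one with $p\,p_x$, one equal to $-6\int_0^L e^{-\mu x}F'(\phi)\,s\,p^2\,dx$, and one equal to $-2\int_0^L e^{-\mu x}F''(\phi)\,s^3\,p\,dx$. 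The last two already match (up to their coefficients) terms in the claimed identity (\ref{rate11_MarchXu2016}).

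Next I would integrate the first integral by parts. Writing $2p\,p_x=(p^2)_x$ gives
$$-2\int_0^L e^{-\mu x}F(\phi)\,p\,p_x\,dx=-\Big[e^{-\mu x}F(\phi)\,p^2\Big]_0^L+\int_0^L\frac{d}{dx}\big(e^{-\mu x}F(\phi)\big)\,p^2\,dx,$$
and since $\frac{d}{dx}F(\phi(x,t))=F'(\phi)\,s$ the interior term splits as $-\mu\int_0^L e^{-\mu x}F(\phi)\,p^2\,dx+\int_0^L e^{-\mu x}F'(\phi)\,s\,p^2\,dx$. The second of these combines with the $-6\int_0^L e^{-\mu x}F'(\phi)\,s\,p^2\,dx$ from the previous step to give the displayed coefficient $-5$, while the boundary value at $x=L$ is precisely $-e^{-\mu L}F(\phi(L,t))\,p^2(L,t)$.

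The substantive part is the remaining boundary term $+F(\phi(0,t))\,p^2(0,t)$, which has to be eliminated by the boundary relations. From the second-order boundary condition in (\ref{secondderivative}),
$$p(0,t)=\frac{k_I F(\phi(L,t))\,s(L,t)-2k_I F'(\phi(0,t))\,\phi(L,t)\,s(0,t)}{F^2(\phi(0,t))},$$
so that $F(\phi(0,t))\,p^2(0,t)=F^{-3}(\phi(0,t))\big[k_I F(\phi(L,t))s(L,t)-2k_I F'(\phi(0,t))\phi(L,t)s(0,t)\big]^2$. Expanding the square gives an $s^2(L,t)$ piece, which is $\frac{k_I^2F^2(\phi(L,t))}{F^3(\phi(0,t))}s^2(L,t)$; a cross piece, in which I would further substitute $s(0,t)=k_I\phi(L,t)/F(\phi(0,t))$ from the first-order boundary condition in (\ref{firstderivative}), turning it into $-\frac{4k_I^3F(\phi(L,t))F'(\phi(0,t))}{F^4(\phi(0,t))}s(L,t)\,\phi^2(L,t)$; and an $s^2(0,t)$ piece, kept as $\frac{4k_I^2(F'(\phi(0,t)))^2}{F^3(\phi(0,t))}\phi^2(L,t)\,s^2(0,t)$. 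Collecting all the contributions yields exactly (\ref{rate11_MarchXu2016}).

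The only real difficulty is bookkeeping: one must keep straight which occurrences of the traces $s(0,t)$ and $s(L,t)$ are left untouched and which are removed via (\ref{firstderivative}) (here $s(0,t)$ is replaced in the cross term but deliberately \emph{not} in the quadratic $s^2(0,t)$ term), and one must track the power of $F(\phi(0,t))$ accumulated from dividing by $F^2(\phi(0,t))$ inside $p^2(0,t)$. As for Lemma~\ref{Tech1_Xu2016} and Lemma~\ref{Tech2_Xu2016}, the differentiation under the integral and the integration by parts are justified once the $H^2$ well-posedness — and hence the $C^1$-in-time regularity of the classical solution — has been established, so the above manipulation is legitimately formal at this point.
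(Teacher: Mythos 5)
Your computation is correct and follows exactly the route the paper intends: the paper omits the proof of Lemma~\ref{Tech3_Xu2016} (stating only that it is proved ``similarly'' to Lemma~\ref{Tech1_Xu2016}), namely differentiation under the integral, substitution of the evolution equation for $p$, integration by parts of the $F(\phi)\,p\,p_x$ term, and elimination of the boundary value $p(0,t)$ via the boundary conditions in (\ref{secondderivative}) and (\ref{firstderivative}). Your bookkeeping (the $1-6=-5$ coefficient, the power $F^{-3}(\phi(0,t))$ from $F(\phi(0,t))/F^4(\phi(0,t))$, and replacing $s(0,t)$ only in the cross term) reproduces every term of (\ref{rate11_MarchXu2016}), so the proposal supplies precisely the details the paper leaves to the reader.
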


Let $T>0$. For each function $(\phi, \xi)\in C([0,T]; C^1[0,L]\times \mathbb R)$ we define
$$
\|(\phi, \phi_x,\xi)\|_{T,\infty} = \sup_{t\in[0,T]}|\xi(t)| \ +$$
$$ + \sup_{\begin{array}{l} x\in [0,L] \\ t\in[0,T]\end{array}} |\phi(x,t)| \ +
\sup_{\begin{array}{l} x\in [0,L] \\ t\in[0,T]\end{array}} |\phi_x(x,t)|.
$$
By combining results of Lemma~\ref{T1_Xu2016}-\ref{Tech3_Xu2016} the following theorem
is obtained.

\begin{theorem} \label{main1_Xu2016} Let the parameters $k_I$, $\mu$, $q_1$ and $q_2$ be
determined as in Lemma~\ref{T1_Xu2016}. Then there are positive real constants \startmodifVA $q_3$, $q_4$, \stopmodif
$\delta$ and $\beta$ such that,  for each function  $(\phi, \xi) \in
 C([0,T];  C^3[0,L] \times \mathbb R) \cap   C^1([0,T];  C^2[0,L] \times \mathbb R )$
satisfying the PDE (\ref{nonlinear1})-(\ref{secondderivative}) and the condition
$\|(\phi, \phi_x,\xi)\|_{T,\infty} < \delta$, the following differential inequality holds true
\begin{equation} \label{Energy1_March2016Xu}
\dot S (\phi(t), \xi(t)) \leq -\beta S (\phi(t), \xi(t)) \;\;\;\;\forall\; t\in [0,T].
\end{equation}
Moreover there exists a positive constant $K\geq 1$ such that
\begin{equation} \label{L1Xu_mars2016}
 K^{-1} \|(\phi,\xi)\|^2_X \leq S (\phi,\xi)  \leq K \|(\phi,\xi)\|^2_X\;\;\;\;\forall \; (\phi,\xi)\in X.
 \end{equation}
\end{theorem}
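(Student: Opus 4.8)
The plan is to prove the two assertions of Theorem~\ref{main1_Xu2016} separately. The norm equivalence (\ref{L1Xu_mars2016}) is essentially free: since $e^{-\mu L}v^2 \le e^{-\mu x}v^2 \le v^2$ for $x\in[0,L]$, the functional $V_1$ in (\ref{Energy2_Xu2016}) satisfies $e^{-\mu L}\int_0^L\phi_x^2\,dx \le V_1(\phi_x) \le \int_0^L\phi_x^2\,dx$ and the analogous bound holds for $V_1(\phi_{xx})$; combined with the equivalence $M^{-1}\|(\phi,\xi)\|_Z^2 \le V(\phi,\xi)\le M\|(\phi,\xi)\|_Z^2$ of (\ref{Lyap1_Xu2016}) and the identity $\|(\phi,\xi)\|_X^2 = \int_0^L(\phi^2+\phi_x^2+\phi_{xx}^2)\,dx + \xi^2$, the definition (\ref{lyapunovcomplet}) of $S$ yields (\ref{L1Xu_mars2016}) with some $K\ge 1$, for any choice of positive $q_3$, $q_4$.

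The dissipation inequality (\ref{Energy1_March2016Xu}) is the substantive part, and the heart of the argument is the order in which the free constants $q_3$, $q_4$, $\delta$ are fixed. Keep $k_I$, $\mu$, $q_1$, $q_2$ as in Lemma~\ref{T1_Xu2016}. I would first observe that the first two lines of the identity (\ref{rate1_Xu2016}) coincide exactly with the right-hand side of (\ref{Lyap4_Xu2016}); hence the computation (\ref{Lyap10_Xu2016})--(\ref{Lyap20_Xu2016}) applies verbatim and yields $\dot V \le -M_1\big(\xi^2 + \int_0^L e^{-\mu x}\phi^2\,dx\big) - \tfrac12 re^{-\mu L}\phi^2(L,t) + R_V$, where $R_V$ collects the remaining, genuinely nonlinear lines of (\ref{rate1_Xu2016}). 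Using that $F_1$, $F_2$ are bounded on $[-\delta,\delta]$ and that every term of $R_V$ carries at least one pointwise factor among $\phi$, $\phi_x$, $\xi$ — each $<\delta$ under the standing hypothesis $\|(\phi,\phi_x,\xi)\|_{T,\infty}<\delta$ — the Cauchy--Schwarz and Young inequalities give $|R_V|\le C\delta\big(\xi^2 + \int_0^L e^{-\mu x}\phi^2\,dx + \phi^2(L,t)\big)$, with $C$ depending only on the already-fixed data.

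Next I would treat $q_3\dot V_1(\phi_x)$ from (\ref{rate10_MarchXu2016}) and $q_4\dot V_1(\phi_{xx})$ from (\ref{rate11_MarchXu2016}) in the same spirit. In (\ref{rate10_MarchXu2016}) keep the dissipative terms $-re^{-\mu L}s^2(L,t)$ and $-r\mu\int_0^L e^{-\mu x}s^2\,dx$; the only positive term not carrying a small factor is $r^{-1}k_I^2\phi^2(L,t)$, which, once multiplied by $q_3$, will be dominated by the $-\tfrac12 re^{-\mu L}\phi^2(L,t)$ coming from $\dot V$ provided $q_3$ is small enough, while every other term contains a factor $\phi$, $s=\phi_x$ or $\xi$ — in particular $(F(\phi))_x=F'(\phi)s$ — and is thus $O(\delta)$ times a kept quantity. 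In (\ref{rate11_MarchXu2016}), for $\delta$ small one has $F(\phi)\ge r/2$, so $-e^{-\mu L}F(\phi(L,t))p^2(L,t)$ and $-\mu\int_0^L e^{-\mu x}F(\phi)p^2\,dx$ are genuinely dissipative; here the only non-small positive term is $\tfrac{k_I^2F^2}{F^3}s^2(L,t)$, which, once multiplied by $q_4$, is dominated by the $-q_3 re^{-\mu L}s^2(L,t)$ generated one level below as soon as $q_4$ is small relative to $q_3$, while the rest carry factors such as $s^3p$, $sp^2$, $s(L,t)\phi^2(L,t)$ or $\phi^2(L,t)s^2(0,t)$ and are $O(\delta)$ or $O(\delta^2)$. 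Carrying out the choices in the order $k_I,\mu,q_1,q_2 \rightarrow q_3 \rightarrow q_4 \rightarrow \delta$ — each depending only on quantities fixed earlier — all non-small positive boundary terms get absorbed, and $\delta$ is finally taken small enough to absorb every remaining $O(\delta)$ and $O(\delta^2)$ contribution into the reserved fractions of the negative quadratic terms, giving $\dot S \le -c\big(\xi^2 + \int_0^L(\phi^2+\phi_x^2+\phi_{xx}^2)\,dx\big) = -c\|(\phi,\xi)\|_X^2$ for some $c>0$; then (\ref{L1Xu_mars2016}) converts this into (\ref{Energy1_March2016Xu}) with $\beta=c/K$.

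The main obstacle is precisely this bookkeeping of the boundary traces at $x=L$. The derived boundary conditions in (\ref{firstderivative}) and (\ref{secondderivative}) feed $\phi(L,t)$ into the inflow boundary of the $s$-equation and $s(L,t)$ into that of the $p$-equation, so $\dot V_1(\phi_x)$ and $\dot V_1(\phi_{xx})$ unavoidably contain positive boundary terms of size $\propto k_I^2\phi^2(L,t)$ and $\propto k_I^2s^2(L,t)$ that no smallness of $\delta$ can kill. The remedy — the Coron-type cascade $1 \gg q_3 \gg q_4$, so that the boundary dissipation available at each level dominates the parasitic boundary term produced at the next level — is what makes the construction close, and one must verify that the induced constraints on $(q_3,q_4,\delta)$ form a genuinely triangular (non-circular) system. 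Everything else reduces to routine majorization, and the regularity hypothesis $(\phi,\xi)\in C([0,T];C^3[0,L]\times\RR)\cap C^1([0,T];C^2[0,L]\times\RR)$ is what legitimizes the formal identities of Lemmas~\ref{Tech1_Xu2016}--\ref{Tech3_Xu2016} used throughout.
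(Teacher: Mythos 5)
Your proposal is correct and follows essentially the same route as the paper's proof: bound $\dot V$, $\dot V_1(\phi_x)$, $\dot V_1(\phi_{xx})$ via Lemmas~\ref{Tech1_Xu2016}--\ref{Tech3_Xu2016} into dissipative parts plus $O(\|(\phi,\phi_x,\xi)\|_{T,\infty})$ remainders, absorb the parasitic boundary terms $r^{-1}k_I^2\phi^2(L,t)$ and $\propto k_I^2 s^2(L,t)$ through the cascade of small weights $q_3$, $q_4$ and then shrink $\delta$, which is exactly what the paper compresses into ``taking $\delta$, $q_3$ and $q_4$ sufficiently small,'' and conclude with the norm equivalence (\ref{L1Xu_mars2016}). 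Your explicit triangular ordering $q_3 \rightarrow q_4 \rightarrow \delta$ is a faithful (and slightly more detailed) rendering of the paper's argument.
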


\begin{proof} Without loss of generality we assume that $\delta \leq 1$. For the sake of simplicity
we write $C_{T}= \|(\phi,\phi_x,\xi)\|_{T,\infty}$.
By Lemma~\ref{Tech1_Xu2016},
Lemma~\ref{T1_Xu2016} and the Cauchy-Schwarz inequality there exists a positive constant $K_1>0$ such that
$$\dot V (\phi(t),\xi(t)) \leq -\alpha V (\phi(t),\xi(t)) - (r/2) e^{-\mu L} \phi^2(L,t) +$$
\begin{equation}\label{Xu1_April2016}
+ K_1 C_{T} \left[\int_0^L e^{-\mu x} \phi^2(x,t)dx+\xi^2(t) + \phi^2(L,t)\right].
\end{equation}
Similarly, by Lemma~\ref{Tech2_Xu2016} there exists a positive constant $K_2>0$ such that
%$$\dot V_1 (\phi_x(t)) \leq  - (r e^{-\mu L}-K_2 C_{T}) s^2(L,t) +$$
\begin{multline}\label{Xu2_April2016}
\dot V_1 (\phi_x(t)) \leq  - (r e^{-\mu L}-K_2 C_{T}) s^2(L,t) \\
+r^{-1}k_I^2  \phi^2(L,t) -r\mu \int_0^L e^{-\mu x} s^2(x,t)dx \\
+K_2 C_{T}\left(\phi^2(L,t) + \int_0^L e^{-\mu x} s^2(x,t)dx\right).
\end{multline}
Similarly, by Lemma~\ref{Tech3_Xu2016} there exists a positive constant $K_3>0$ such that
\begin{multline}\label{Xu3_April2016}
\dot V_1 (\phi_{xx}(t))  \leq  \\ - (r e^{-\mu L}-K_3 C_{T}) p^2(L,t) + (r^{-2} k_I^2+ K_3 C_{T}) s^2(L,t) \\
+  K_3 C_{T} \phi^2(L,t)+ K_3 C_{T} \int_0^L e^{-\mu x} s^2(x,t)dx \\
-(r\mu-K_3C_T) \int_0^L e^{-\mu x} p^2(x,t)dx.
\end{multline}
As $C_T$ can be made as small as we like with $\delta$,  adding the inequalities
(\ref{Xu1_April2016})-(\ref{Xu3_April2016}) and taking $\delta$, $q_3$ and $q_4$
sufficiently small lead us directly to the following differential relation
\begin{multline}\label{Xu4_April2016}
\dot S (\phi(t),\xi(t)) \leq -\dfrac{\alpha}{2} V (\phi(t),\xi(t)) \\
-  \dfrac{ q_3 r \mu}{2} \int_0^L e^{-\mu x} s^2(x,t)dx
\\ -\dfrac{ q_4 r \mu}{2}\int_0^L e^{-\mu x} p^2(x,t)dx.
\end{multline}
Therefore the theorem is proved by using (\ref{Lyap1_Xu2016}), (\ref{lyapunovcomplet}) and
(\ref{Xu4_April2016}).
\end{proof}

%%%%%%%%%%%%%%%%%%%%%%%%%%%%%%%%%%%%%%%%%%%%%%%%%%%%%%%%%%%%%%%%%%%%%%%%%%%%%%%%%%%%%%%
%%%%%%%%%%%%%%%%%%%%%%%%%%%%%%%%%%%%%%%%%%%%%%%%%%%%%%%%%%%%%%%%%%%%%%%%%%%%%%%%%%%%%%%
%%%%%%%%%%%%%%%%%%%%%%%%%%%%%%%%%%%%%%%%%%%%%%%%%%%%%%%%%%%%%%%%%%%%%%%%%%%%%%%%%%%%%%%

\subsection{Proof of Theorem~\ref{Thm_Main}}
\label{Sec_MainResultProof}

With Theorem~\ref{main1_Xu2016}, we are now ready to prove the main result of the paper.

\noindent\textbf{Proof of Theorem~\ref{Thm_Main}~:}
We first prove the local existence of a unique solution to the closed-loop system
(\ref{newsys})-(\ref{2Xu15dec2015}) for each compatible initial state $(\phi_0,\xi_0)$ in $H^2(0,L)\times \mathbb R$.
The closed-loop control system (\ref{newsys})-(\ref{2Xu15dec2015}) is governed by the following PDE
coupled with an ODE through the boundary as follows:
\begin{equation}\label{Xu1_November2016}
\left\{\begin{array}{l}  \phi_t=-F(\phi)\phi_x,\;\; \dot\xi=\phi(L,t)\\[0.1cm]
\phi(0,t)=-k_I \xi,\;\; (\phi(x,0), \xi(0))= (\phi_0(x),\xi_0).
\end{array}\right.
\end{equation}
Recall that $X=H^2(0,L)\times \mathbb R$ is equipped with the norm $\|(f,z)\|_X^2=\|f\|^2_{H^2}+z^2$.
Assume that the initial condition $(\phi_0,\xi_0)$ is in $B_X(0,\delta)$, $\delta>0$ and satisfies the
$C^0$ and $C^1$ compatibility conditions as in (\ref{nonlinear1}) and  (\ref{firstderivative}).

By using the Theorem~1.2 and the Propositions 1.3-1.5 in \cite[pp.362-365]{taylor}, or \cite[Theorem~II]{kato}
we deduce the existence of a unique solution to (\ref{Xu1_November2016}) for some $\delta>0$ and $T>0$~:
$$(\phi, \xi) \in C([0,T]; H^2(0,L)\times \mathbb R)\cap C^1([0,T]; H^1(0,L) \times \mathbb R).$$
The reader is referred to \cite{Coron} and \cite[Appendix B]{Bastin2016} for a rigorous proof to
the initial boundary case.

Now we prove local exponential stability of the null state to (\ref{Xu1_November2016}).  Notice that
each compatible $w_0=(\phi_0,\xi_0) \in X$  admits a sequence of
$w_{0,n}=(\phi_{0,n},\xi_{0,n}) \in H^4(0,L) \times \mathbb R$ satisfying the $C^k$
compatibility condition, $k=0,1,2,3$, such that $\lim_{n\rightarrow \infty} \|w_{0,n}-w_0\|_X=0$
(cf. \cite[p.130]{Brezis1983}). Hence it is sufficient for us to prove the exponential stability
for $w_0 \in H^4(0,L) \times \mathbb R$. As the solution depends continuously on the initial
condition (see \cite[Theorem~III]{kato}),  then the exponential decay of solution from compatible
$w_0 \in X$ is proved by taking the limit.

Indeed, take a compatible $w_0\in (H^4(0,L) \times \mathbb R)\cap B_X(0,\delta)$. As stated above the system
(\ref{Xu1_November2016}) has a unique solution $w(t)$ in $H^4(0,L)\times \mathbb R$ (cf. \cite{taylor})
such that
$$ w \in C([0,T]; H^4(0,L)\times \mathbb R)\cap C^1([0,T]; H^3(0,L) \times \mathbb R)$$
where $w(t)=(\phi(t), \xi(t))$.
By the continuous embedding (cf. \cite[p.167]{Brezis1983}) $H^n(0,L) \hookrightarrow C^{n-1}[0,L]$
$\forall\; n\geq 1$ integer,
we have the solution
$$ w \in C([0,T]; C^{3}[0,L] \times \mathbb R)\cap C^1([0,T]; C^{2}[0,L] \times \mathbb R).$$

Let $\|w_0\|_X<\delta_1$ for some $\delta_1>0$. We choose $\delta_1>0$ sufficiently small
such that $\|w_0\|_X< K\delta_1$ implies  $\|(\phi, \phi_x, \xi)\|_{T,\infty} <\delta$ with smaller $T$
if necessary (cf. \cite[Theorem~2.2 , p.46]{Majda}). Notice that $K$ and $\delta$ are defined in
our Theorem~\ref{main1_Xu2016}. Then direct application of Theorem~\ref{main1_Xu2016} allows us to get
$\|w(T)\|_X< K\delta_1$. Since the system is autonomous, the same argument can be used on the time
interval $[T,2T]$. By successive iterations we obtain the differential inequality (\ref{Energy1_March2016Xu})
satisfied for all $t\geq 0$. By (\ref{Energy1_March2016Xu})-(\ref{L1Xu_mars2016}) we find positive
constants $M$ and $\omega$ such that
$$\|w(t)\|_X\leq Me^{-\omega t}\|w_0\|_X \;\; \forall\; w_0\in B_X(0,\delta_1).$$
The regulation effect is automatically guaranteed, since $w \in C([0,\infty); H^2(0,L)\times \mathbb R)$.
Hence the proof of Theorem~\ref{Thm_Main} is complete. \hfill $\Box$

%%%%%%%% NUMERICAL SIMULATION%%%%%%%%%%%%%%%%%%%%%%%%%%%%%%%%%%%%%%%%%%%%%%%%%%%%

\section{Numerical simulations}
\label{Sec_NumSimu}
The performance of the integral controller on the linearized system has been studied through numerical
simulations and discussed in the paper \cite{tu2015}. Here the simulations are done on the nonlinear system
to validate the theoretical results of Theorem \ref{Thm_Main}. These simulations are realized by using
the discretization method with the Preissmann scheme \cite{Lyn1987}. The details of the method have been
presented in our paper \cite{tu2015} with the following parameters : $N, \; \theta, \; \Delta t$ and $\Delta x$.
Note that $N$ is the number of \startmodifVA discretized \stopmodif space intervals, $\Delta t$ and $\Delta x$ are the
time \startmodifVA discretization \stopmodif  step and the space \startmodifVA discretization \stopmodif step, respectively,
and the weight parameter $\theta \in [0.5 , 1]$ is made use of to compute the value of a function from
its neighbor values. In this section, the parameters take the following values: $L=50m$, $N =100$,
$\theta =0.55$, and $\dfrac{\Delta t}{\Delta x} =0.5$.  Moreover, to simulate the nonlinear closed-loop
system (\ref{sysnoise}), the following flux function and numerical values are applied: $F(\psi) =\psi ^2 + 3$,
$k_I =0.05$, and $y_r = 0.5$. The constant perturbations on the output and on the control are given by
$w_o = 0.1$ and $w_c = 0.05$, respectively.

\begin{figure}[!tbp]
  \centering
  \begin{minipage}[b]{0.45\textwidth}
   \includegraphics[width=\textwidth]{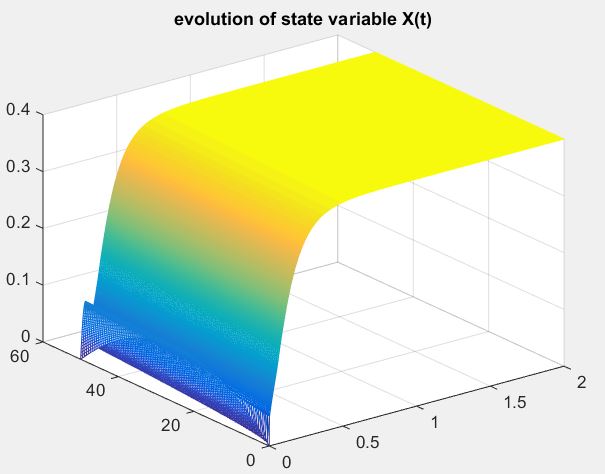}
    \caption{Evolution of $\psi (x,t)$  }
    \label{fig:1}
  \end{minipage}
\hfill
 \begin{minipage}[b]{0.45\textwidth}
    \includegraphics[width=\textwidth]{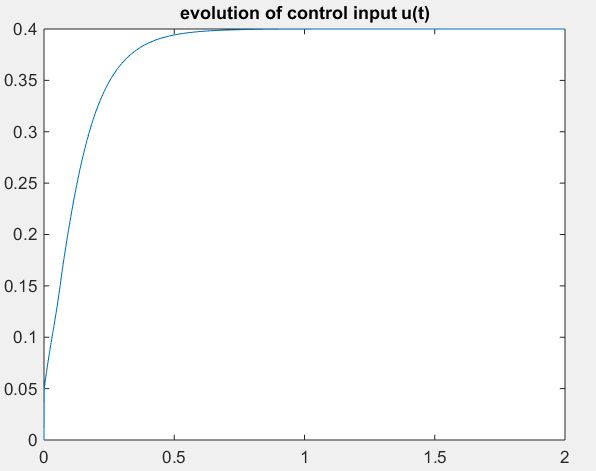}
    \caption{Evolution of input $u(t)$  }
    \label{fig:4}
  \end{minipage}

\end{figure}

\begin{figure}[!tbp]
  \centering
  \begin{minipage}[b]{0.45\textwidth}
   \includegraphics[width=\textwidth]{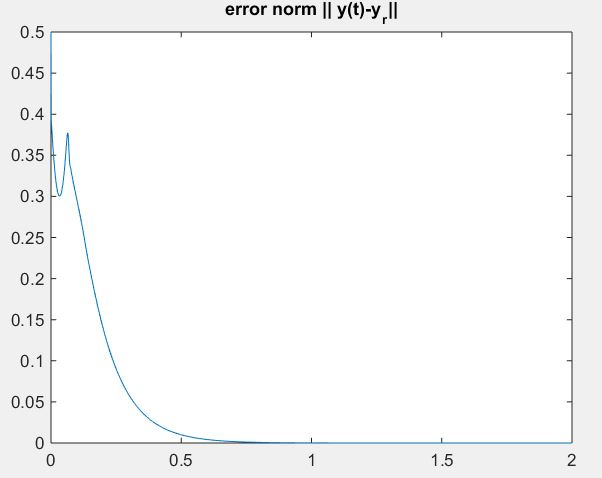}
    \caption{Evolution of error $|y(t)-y_r|$}
    \label{fig:2}
  \end{minipage}
  \hfill
  \begin{minipage}[b]{0.45\textwidth}
   \includegraphics[width=\textwidth]{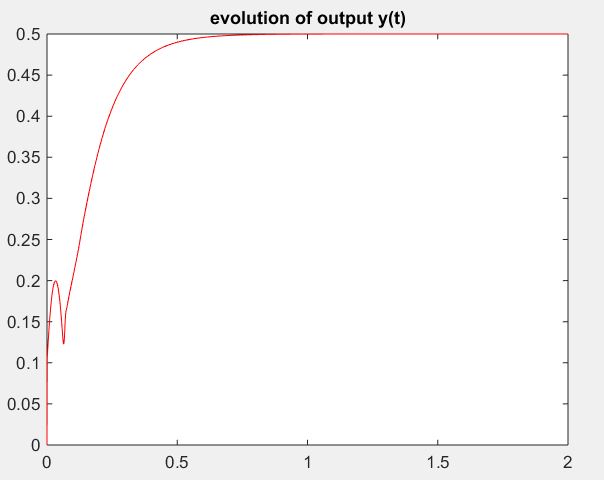}
    \caption{Evolution of output $y(t)$ }
    \label{fig:3}
  \end{minipage}
\end{figure}

Figure~\ref{fig:1} shows asymptotic  stability of the nonlinear closed-loop system and illustrates the
evolution of the state $\psi(x,t)$. Moreover the regulation of the output $y(t)$ to the desired
set-point $y_r$ is illustrated by Figure~\ref{fig:2} and Figure~\ref{fig:3}. Finally Figure~\ref{fig:4}
shows the evolution of the control input $u(t)$  perturbed by $w_c$. As clearly indicated by the
simulations, by virtue of the integral action the output converges to the set-point as
$t \rightarrow \infty$ independently of the  constant perturbations.

\section{Conclusions}
\label{Sec_Conclusion}
%------------------------------

We have considered the design of stabilizing integral controllers for the nonlinear systems
described by scalar hyperbolic PDE. First we have proposed an interval of integral gain
for stabilization and then proved exponential stability of the linearized system controlled
by the designed controller.  Moreover, for the linearized system we have been able to
establish a necessary and sufficient condition for the integral gain to get exponential
stability of the controlled system in the $L^2$ norm. Then we have proved local
exponential stability of the nonlinear controlled system by the same integral controller in
the $H^2$ norm. Both of two main proofs have used Lyapunov techniques with the Lyapunov functions
in the quadratic form. The regulation of the output to the set-point is automatically guaranteed
from the local exponential stability of the closed-loop system in $H^2$ norm.
Numerical simulations for the nonlinear closed-loop system have been carried out to validate the
performance of the controlled system. In the future, the work is to extend the design  of
stabilizing PI controllers for networks of scalar systems governed by nonlinear hyperbolic PDE.

%\appendices
\appendix
\subsection{Proof of Proposition 1}
\label{Sec_Appendix}
A necessary and sufficient condition for exponential stability of the system
(\ref{linearsys})-(\ref{boundarylinear}) is that all the poles of the transfer
function have negative real part (see  \cite{xu1993},
\startmodifVA \cite[Chapter 13]{Bellman1963}, or  \cite[Appendix Theorem 3.5]{Hale1993}).
\stopmodif
To formulate the transfer function, we set $v(t)$ as the new control input with
\startmodifVA $y(t)$ \stopmodif as the output :
\begin{equation} \label{boundarytf}
 \phi (0,t) = -k_I \xi (t) + v(t), \;    y(t) = \phi (L,t).
\end{equation}
By taking the Laplace transform in (\ref{linearsys})-(\ref{boundarylinear}),
we obtain:
\begin{equation} \label{phi}
s \hat{\phi} + r  \hat{\phi}_x = 0
\end{equation}
\begin{equation} \label{xi}
s\hat{\xi} = \hat{\phi}(L,s),
\end{equation}
\begin{equation} \label{y}
\hat{\phi}(0,s) = - k_I \hat{\xi}(s) + \hat{v}(s) \ , \;\;  \hat{y}(s) = \hat{\phi}(L,s)
\end{equation}
From (\ref{phi}) we have the solution $ \hat{\phi}(x,s) = \hat{\phi} (0,s)e^{-s r^{-1}x} $.
Combining it with (\ref{xi}) and (\ref{y}), we obtain:
$$
\hat{y}(s) = \hat{\phi}(L,s) = \hat{\phi} (0,s)e^{-sLr^{-1}} =$$
$$
 e^{-sLr^{-1}} (- k_I \hat{\xi}(s) + \hat{v}(s))
= e^{-sLr^{-1}} \left(- k_I \dfrac{\hat{y}(s)}{s} + \hat{v}(s)\right)$$
Hence,
$$\left(1 +  \dfrac{k_I}{s} e^{-sLr^{-1}}\right) \hat{y}(s) = e^{-sLr^{-1}} \hat{v}(s)$$
Therefore we get the transfer function as follows:
$$ G(s) = \dfrac{\hat{y}(s)}{\hat{v}(s)} = \dfrac{s}{k_I + se^{sLr^{-1}}}$$
The poles of transfer function are solutions of the following equation :
\begin{equation} \label{pole1}
k_I + se^{sLr^{-1}} = 0
\end{equation}
We set
\begin{equation} \label{pole1}
\mu = sLr^{-1}\;\;\mbox{\rm  and}\;\; \alpha = k_I Lr^{-1}.
\end{equation}
Note that $\alpha>0$.
The characteristic equation now becomes
\begin{equation} \label{pole}
\alpha + \mu e^{\mu} = 0
\end{equation}
The proposition is proved if we show that the equation (\ref{pole}) has all the
solutions $\mu$ in the left-half complex plane $\Re e (\mu) <0$ if and only if
$\alpha \in (0, \dfrac{\pi}{2})$. \\
Let set $\mu = \sigma +i \eta $, where
$\sigma,\eta \in \RR$. Then (\ref{pole}) is rewritten as follows :
$$ (\sigma+i\eta) e ^{\sigma+i\eta} + \alpha = 0$$
By separating the real part and the imaginary part, we obtain:
\begin{equation} \label{pole2}
-e^{\sigma}(\sigma cos(\eta) \startmodifVA - \stopmodif  \eta sin(\eta)) = \alpha
\end{equation}
\begin{equation} \label{pole3}
\eta cos(\eta) + \sigma sin(\eta)=0
\end{equation}
We consider the following two cases.
\begin{itemize}
\item If $sin(\eta)=0$, by (\ref{pole3}),  $\eta\; cos(\eta)=0$ implies $\eta=0$.
From (\ref{pole2}), we have $ \alpha = -\sigma e^{\sigma}$. The last equation has no solution
$\sigma \geq 0$ whatever is $\alpha >0$. Hence each solution $\sigma$ is negative if and only if
$\alpha \in (0, \pi/2)$.

\item If $sin(\eta) \neq 0$, from (\ref{pole3}),
\begin{equation} \label{realpart}
\sigma = - \dfrac{\eta cos(\eta)}{sin(\eta)}
\end{equation}
Thus $ \alpha = H(\eta) $
where
$$ H(\eta) = \dfrac{\eta}{sin(\eta)} \ \mbox{exp}\left(\dfrac{-\eta cos(\eta)}{sin(\eta)}\right)$$
Because $H(\eta)$ is a pair function, we only need to consider the case where $\eta >0$.
Thus $\alpha >0$ if and only if $sin(\eta) >0$.
As $\eta>0$ and $sin(\eta) >0$, we set  $\eta = \gamma + 2k\pi$, where $\gamma \in (0,\pi)$ and
$k \in \mathbb{N}$. \\
Now considering the function $H(\eta)$, we have :
$$\dfrac{\partial H(\eta)}{\partial \eta} = e^{\dfrac{-\eta cos(\eta)}{sin(\eta)}}
\dfrac{sin^2(\eta)-\eta sin(2\eta)+\eta ^2}{sin^3(\eta)} $$
One can easily check that $sin^2(\eta)-\eta sin(2\eta)+\eta ^2 \geq 0$ for all $\eta >0$. Therefore,
$\dfrac{\partial H(\eta)}{\partial \eta} \geq 0$.
Hence, on each interval $(2k\pi, 2k\pi + \pi)$, the function $H(\eta)$ is continuous and monotonic increasing.
Moreover we have
$$\lim\limits_{\eta \to (2k +1) \pi^-}H(\eta) = +\infty, \; \; \lim\limits_{\eta \to 2k\pi
+ \frac{\pi}{2}}H(\eta) = 2k\pi + \frac{\pi}{2}.$$
In addition, $\lim\limits_{\eta \to 2k\pi^+}H(\eta) = 0$
$\forall \; k \in \mathbb{N}$ and  $k >0$,  and $\lim\limits_{\eta \to 0}H(\eta) = e^{-1}$.
By (\ref{realpart}) and $\eta=\gamma +2k\pi$,we have  $\sigma<0$ if and only if  $\gamma \in (0,\frac{\pi}{2})$.
Obviously $\sigma\geq 0$ if $\gamma \in [\frac{\pi}{2},\pi)$. Therefore
$\sigma<0$ if and only if the equation $H(\eta)-\alpha=0$ has all its solutions in
$\cup_{k=0}^\infty(2k\pi, 2k\pi+\frac\pi 2)$. \startmodifVA Since $\alpha$ needs to be in one interval
including $0$, we have  $\alpha \in (0,\frac{\pi}{2})$.\stopmodif
\end{itemize}
From the two cases, the proposition is proved.
%\section*{Acknowledgment}

%The authors would like to thank...

% Can use something like this to put references on a page
% by themselves when using endfloat and the captionsoff option.
\ifCLASSOPTIONcaptionsoff
  \newpage
\fi

\section*{Acknowledgment}
The authors would like to thank the editors and the anonymous reviewers for their useful suggestions.

\begin{IEEEbiography}[{\includegraphics[width=1in,height=1.25in, clip,keepaspectratio]{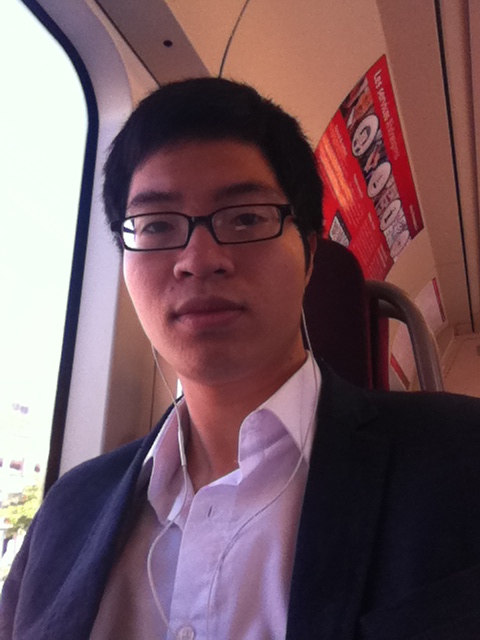}}]
{Ngoc-Tu Trinh} graduated in electrical engineering from Hanoi University
of Science and Technology, Vietnam, in 2012. Afterwards, he received the
Master degree in Automatic control and Systems engineering from University
of Lyon, France in 2014. He is currently doing a Ph.D in the same
institution on the control and regulation of nonlinear hyperbolic partial
differential equations.

\label{trinh}
\end{IEEEbiography}

% if you will not have a photo at all:
\begin{IEEEbiography}[{\includegraphics[width=1in,height=1.0in,clip,keepaspectratio]{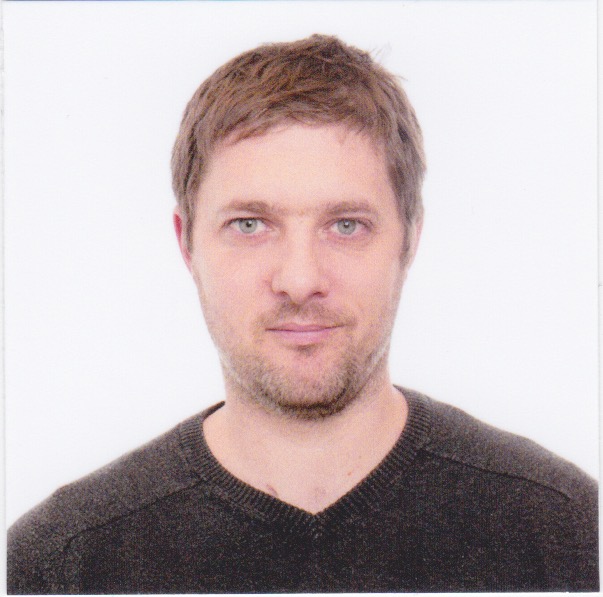}}]
{Vincent Andrieu}
graduated in applied mathematics from INSA de Rouen,
France, in 2001. After working in ONERA (French aerospace research company),
he obtained a PhD degree from Ecole des Mines de Paris in 2005. In 2006, he
had a research appointment at the Control and Power Group, Dept. EEE, Imperial
College London. In 2008, he joined the CNRS-LAAS lab in Toulouse, France, as
a CNRS-charg\'{e} de recherche. Since 2010, he has been working in LAGEP-CNRS,
Universit\'{e} de Lyon 1, France. In 2014, he joined the functional analysis
group from Bergische Universit\"at Wuppertal in Germany, for two sabbatical years.
His main research interests are in the feedback stabilization of controlled
dynamical nonlinear systems and state estimation problems. He is also interested
in practical application of these theoretical problems, and especially in the
field of aeronautics and chemical engineering.

\label{andrieu}
\end{IEEEbiography}

% insert where needed to balance the two columns on the last page with
% biographies
%\newpage

%\begin{IEEEbiographynophoto}{C.-Z. Xu}
%Biography text here.
%\end{IEEEbiographynophoto}

\begin{IEEEbiography}[{\includegraphics[width=1in,height=1.0in,clip,keepaspectratio]{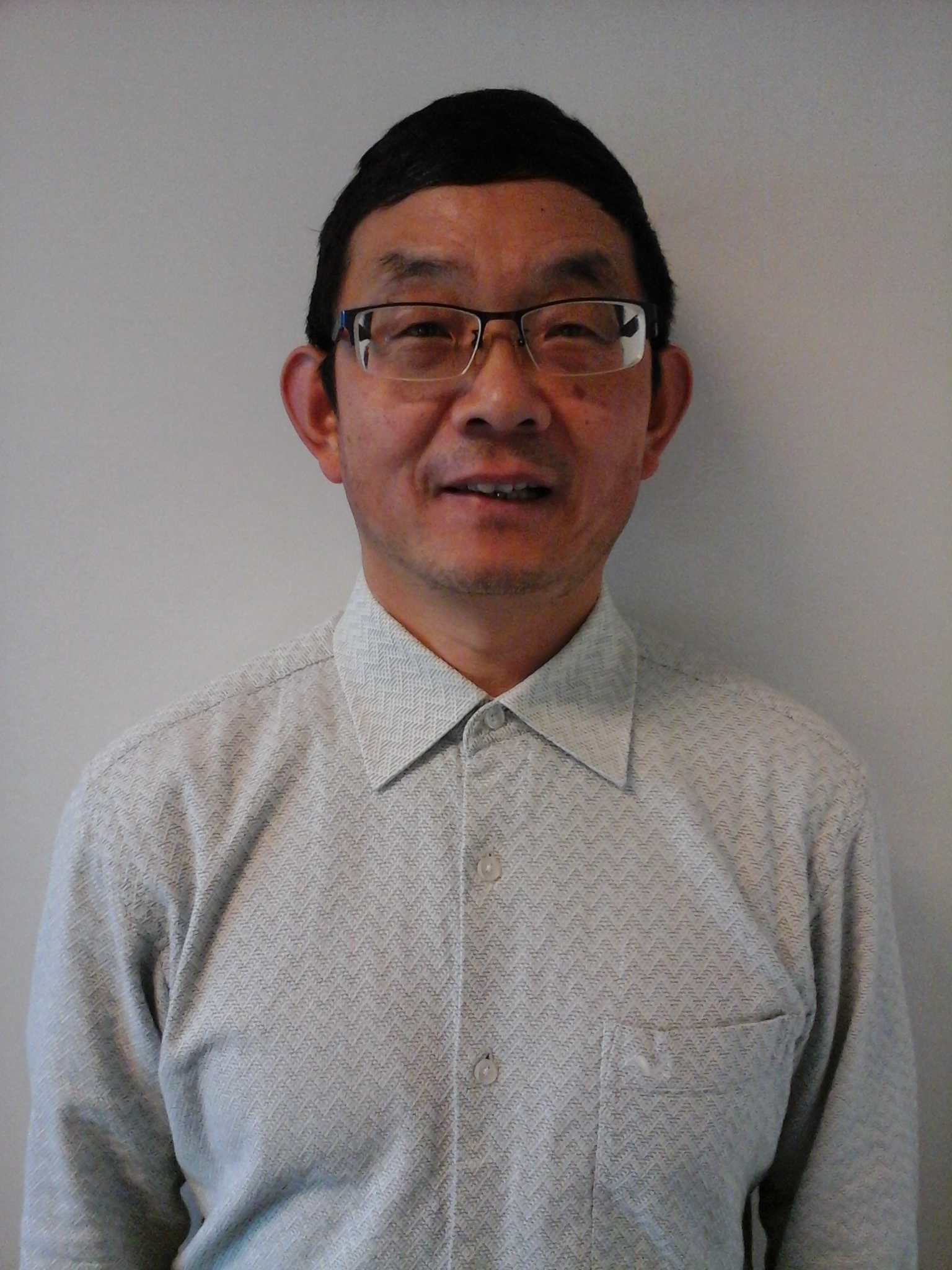}}]
{Cheng-Zhong Xu} received the PhD degree in automatic control and signal processing from Institut National Polytechnique
de Grenoble, France, in 1989,  and the Habilitation degree in applied mathematics and automatic control from University
of Metz in 1997. From 1991 through 2002 he was charg\'{e} de recherche (research officer) in INRIA (Institut National de
Recherche en Informatique et en Automatique).

Since 2002 he has been a professor of automatic control in University of Lyon, France. From 1995 through 1998 he
was an associated editor of the IEEE Transactions on Automatic Control. Currently he is  associated editor
of the SIAM Journal on Control and Optimization. His research  interests include control of distributed
parameter systems and its applications to mechanical and chemical engineerings.

%Biography text here.
\end{IEEEbiography}

%\begin{IEEEbiography}{N.T Trinh}
%Biography text here.
%\end{IEEEbiography}
%
%% if you will not have a photo at all:
%\begin{IEEEbiographynophoto}{V. Andrieu}
%Biography text here.
%\end{IEEEbiographynophoto}
%
%% insert where needed to balance the two columns on the last page with
%% biographies
%%\newpage
%
%\begin{IEEEbiographynophoto}{C.-Z. Xu}
%Biography text here.
%\end{IEEEbiographynophoto}

\end{document}